\documentclass[11pt,a4paper]{article}

\usepackage[T1]{fontenc}
\usepackage[utf8]{inputenc}
\usepackage{lmodern}
\usepackage[a4paper,margin=28mm]{geometry}
\usepackage{microtype}
\usepackage{mathtools,amssymb,amsthm}
\usepackage{booktabs,tabularx,array}
\usepackage{enumitem}
\usepackage{graphicx}
\usepackage[font=small,labelfont=bf]{caption}
\usepackage{subcaption}
\usepackage{tikz}
\usepackage{xcolor}
\usepackage{listings}
\usepackage[hidelinks]{hyperref}
\usepackage[nameinlink,noabbrev,capitalise]{cleveref}

\numberwithin{equation}{section}
\newtheorem{theorem}{Theorem}[section]
\newtheorem{lemma}[theorem]{Lemma}
\newtheorem{proposition}[theorem]{Proposition}
\newtheorem{corollary}[theorem]{Corollary}
\newtheorem{conjecture}[theorem]{Conjecture}

\theoremstyle{definition}

\theoremstyle{remark}

\newcommand{\V}{V}
\newcommand{\E}{E}
\newcommand{\N}{N}
\newcommand{\D}{\Delta}
\newcommand{\chiD}{\chi'_D}
\newcommand{\qB}{q_B}
\newcommand{\cB}{\mathcal{B}}
\newcommand{\cC}{\mathcal{C}}
\newcommand{\col}{\varphi}
\newcommand{\eps}{\varepsilon}
\DeclareMathOperator{\Int}{int}

\setlist{topsep=2pt,itemsep=1pt,parsep=0pt}
\title{D-coloring of planar graphs}
\author{%
Xiaoxue Hu\textsuperscript{1},
Jiangxu Kong\textsuperscript{2},
and Yiqiao Wang\textsuperscript{3}\thanks{\footnotesize Research supported  by  National Natural Science Foundation of China (Nos.\,12422113)},\\[0.5em]
\small\textsuperscript{1}School of Science,
Zhejiang University of Science and Technology,
Hangzhou 310023, China\\
\small\textsuperscript{2}School of Mathematics,
Hangzhou Normal University,
Hangzhou, China\\
\small\textsuperscript{3}Department of Mathematics,
Beijing University of Technology,
Beijing 100124, China
}
\date{}
\hypersetup{
  pdftitle={The D-Chromatic Index of Planar Graphs with Small or Large Maximum Degree},
  pdfsubject={D-colorings of planar graphs},
  pdfkeywords={D-coloring, rainbow diamond, planar graph, edge-coloring}
}

\begin{document}
\maketitle
\vspace{-2.5em}

\begin{abstract}
A proper edge-coloring of a graph $G$ is a D-coloring if every subgraph
isomorphic to $K_4-e$ is rainbow. The minimum number of colors in such a
coloring is the D-chromatic index $\chiD(G)$. Wang conjectured that every
planar graph of maximum degree $\D\ge4$ satisfies $\chiD(G)\le9$ for
$\D=4$, $\chiD(G)\le10$ for $\D=5$, and $\chiD(G)\le2\D-1$ for
$\D\ge6$. We prove that every planar graph $G$ satisfies
\[
 \chiD(G)\le
 \begin{cases}
  9, & \D(G)\le4,\\
  10, & \D(G)=5,\\
  2\D(G)-1, & \D(G)\ge33.
 \end{cases}
\]
Each bound is best possible in its stated range. Consequently, Wang's conjecture remains open only for
$6\le\D\le32$.
\end{abstract}

\noindent\textbf{Keywords:} D-coloring; rainbow diamond; edge-coloring;
planar graph; book number; Combinatorial Nullstellensatz.

\smallskip
\noindent\textbf{MSC 2020:} 05C15; 05C10.

\section{Introduction}\label{sec:intro}

 All graphs considered in this paper are finite and simple. For a graph
$G$, let $\V(G)$ and $\E(G)$ denote its vertex set and edge set,
respectively, and let $\delta(G)$ and $\D(G)$ denote its minimum and
maximum degrees. We write $\D=\D(G)$ when the underlying graph is clear.
For a positive integer $k$, let $[k]=\{1,\ldots,k\}$. A proper
edge-$k$-coloring of $G$ is a mapping
\(
 \col:\E(G)\longrightarrow [k]
\)
such that adjacent edges receive distinct colors. The chromatic index of
$G$ is denoted by $\chi'(G)$. A subgraph of an edge-colored graph is
\emph{rainbow} if its edges receive pairwise distinct colors.

A \emph{diamond} is a graph isomorphic to $K_4-e$. A proper edge-coloring
of $G$ is a \emph{D-coloring} if every diamond subgraph of $G$ is
rainbow. The minimum number of colors in a D-coloring of $G$ is the
\emph{D-chromatic index}, denoted by $\chiD(G)$. Equivalently, a proper
edge-coloring is a D-coloring if, for every pair of nonincident edges
$ab,cd\in \E(G)$ satisfying
\(
 \bigl|\{ac,ad,bc,bd\}\cap \E(G)\bigr|\ge 3,
\)
the edges $ab$ and $cd$ receive distinct colors.

D-coloring was introduced by Wang~\cite{Wang2026}, motivated by the
B-coloring of Gy\'arf\'as and S\'ark\"ozy~\cite{Gyarfas2023}.
The latter was introduced as one of several ``less strong'' edge
colorings arising from graph formulations of the $(7,4)$-conjecture.
A B-coloring is a proper edge-coloring in which every $4$-cycle is
rainbow, and the corresponding minimum number of colors is denoted by
$\qB(G)$; see also~\cite{Gyarfas2024,Kong2026}.

A diamond is a chorded $4$-cycle, and in any proper edge-coloring its
chord already has a color distinct from those of the cycle edges.
Thus D-coloring imposes the B-coloring condition exactly on chorded
$4$-cycles, or equivalently, on pairs of triangles sharing an edge.

This condition also admits a useful conflict-graph formulation:
$\chiD(G)$ is the chromatic number of the graph with vertex set
$\E(G)$ in which two edges are adjacent whenever they are incident in
$G$ or are opposite edges of a common diamond. This graph contains
$L(G)$ and is a spanning subgraph of $L(G)^2$. Consequently,
\begin{equation}\label{eq:parameter-chain}
 \chi'(G)\le \chiD(G)\le \qB(G)\le \chi_s'(G),
\end{equation}
where $\chi_s'(G)$ denotes the strong chromatic index of $G$.

If no edge of $G$ lies in two triangles, then $G$ is diamond-free and
$\chiD(G)=\chi'(G)$; if $G$ is chordal, every $4$-cycle is chorded and
$\chiD(G)=\qB(G)$. Thus D-coloring does not retain the full connection
of B-coloring with the $(7,4)$-conjecture: every bipartite graph satisfies
$\chiD(G)=\chi'(G)$. Instead, it focuses on conflicts arising from
overlapping triangles.

This triangle-based interpretation naturally leads to the classical
notion of booksize. The \emph{book number} of a graph $G$ is
\(
 \operatorname{bk}(G)
   :=\max_{xy\in \E(G)}|N_G(x)\cap N_G(y)|,
\)
that is, the largest number of triangles sharing a common edge. The
study of this parameter goes back to the book problem of Erd\H{o}s. A
theorem of Edwards and, independently, Khad\v{z}iivanov and Nikiforov
states that every $n$-vertex graph with more than
$\lfloor n^2/4\rfloor$ edges has book number at least $n/6$; see
\cite{Khadz1979,Bollobas2005}.

The book number gives a direct lower bound for the D-chromatic index.
Suppose that an edge $xy$ has distinct common neighbors
$z_1,\ldots,z_t$. Then the $2t+1$ edges
\(
 \{xy\}\cup\{xz_i,yz_i:1\le i\le t\}
\)
must receive pairwise distinct colors in every D-coloring. Indeed, the
only nontrivial pairs are $xz_i$ and $yz_j$ with $i\ne j$, and these are
opposite edges of a diamond with spine $xy$. Similarly, all the edges of
a complete subgraph must receive distinct colors: two such edges are
either incident or are opposite edges of a diamond. We therefore have
the general lower bound
\begin{equation}\label{eq:basic-lower-bound}
 \chiD(G)\ge
 \max\left\{
   \chi'(G),\,
   2\operatorname{bk}(G)+1,\,
   \binom{\omega(G)}2
 \right\},
\end{equation}
where $\omega(G)$ is the clique number of $G$.

For unrestricted graphs, the clique term in
\eqref{eq:basic-lower-bound} can be quadratic in the maximum degree.
In particular,
\(
 \chiD(K_{\D+1})=\binom{\D+1}{2}.
\)
Motivated by this example, Wang~\cite{Wang2026} conjectured that every
graph of maximum degree $\D$ satisfies
\(
 \chiD(G)\le \binom{\D+1}{2}.
\)
He verified this conjecture for $\D\le 5$ and proved the general upper
bound
\[
 \chiD(G)\le
 \left\lfloor
   \frac{9}{16}\D^2+\frac12\D
 \right\rfloor.
\]

The extremal picture is different for planar graphs. Since
$\omega(G)\le 4$, the clique term in \eqref{eq:basic-lower-bound} is at
most $6$, whereas the book term can grow linearly with $\D$. For
$\D\ge 2$, let $F_\D$ be the graph obtained from $K_{2,\D-1}$ by adding
the edge joining the two vertices in the part of size $2$. The graph
$F_\D$ is planar, has maximum degree $\D$, and all its $2\D-1$ edges
must receive distinct colors. Hence
\begin{equation}\label{eq:planar-book-lower-bound}
 \chiD(F_\D)=2\D-1.
\end{equation}
Thus the value $2\D-1$ is forced by a single maximum book.

The corresponding B-coloring problem has already been studied for
planar graphs. Gy\'arf\'as, Martin, Ruszink\'o, and
S\'ark\"ozy~\cite{Gyarfas2024} proved that
\(
 \qB(G)\le 2\D+8
\)
for every planar graph $G$. This was subsequently improved by Kong, Wang and Zheng in
\cite{Kong2026}, where it was shown, in particular, that
\(\qB(G)\le 2\D\) when $\D\ge 38$.
By \eqref{eq:parameter-chain}, the latter result also gives
$\chiD(G)\le 2\D$ in this range. The book construction
\eqref{eq:planar-book-lower-bound} suggests that the extra color is
unnecessary for D-coloring.

There are two additional obstructions at small maximum degree. The
planar graph $K_2\vee P_3$ has maximum degree $4$, and its nine edges
are pairwise conflicting in every D-coloring. For maximum degree $5$,
let $x,y$ be the vertices of $K_2$ and let
$z_1z_2z_3z_4$ be the path in $K_2\vee P_4$. The ten edges
\(
 \{xy,z_2z_3\}
 \cup
 \{xz_i,yz_i:1\le i\le4\}
\)
must receive pairwise distinct colors. These examples, together with
$F_\D$, led Wang to the following conjecture.

\begin{conjecture}[Wang~\cite{Wang2026}]\label{conj1.1}
Let $G$ be a planar graph with maximum degree $\D\ge4$. Then
\[
 \chiD(G)\le
 \begin{cases}
  9,       & \D=4,\\
  10,      & \D=5,\\
  2\D-1,   & \D\ge6.
 \end{cases}
\]
\end{conjecture}

Our main theorem settles both exceptional small-degree cases and proves
the conjectured bound for all sufficiently large maximum degrees.

\begin{theorem}\label{thm:main}
Let $G$ be a planar graph of maximum degree $\D$. Then
\[
 \chiD(G)\le
 \begin{cases}
  9,       & \D\le4,\\
  10,      & \D=5,\\
  2\D-1,   & \D\ge33.
 \end{cases}
\]
 \end{theorem}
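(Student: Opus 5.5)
The three ranges call for two different techniques, so I would handle them separately. In every case the argument is by minimal counterexample. Let $G$ be a planar graph with $\chiD(G)>k$, where $k$ is the target bound, chosen with $|\E(G)|$ minimal. Then every proper subgraph of $G$ has a D-coloring with $k$ colors. For an edge $e$, write $c(e)$ for the number of edges that conflict with $e$, meaning they are incident with $e$ or opposite to $e$ in a diamond. If some edge $e$ has $c(e)<k$, we can colour $G-e$, extend greedily, and reach a contradiction. I will need the observation that deleting edges never creates new diamond conflicts, so a colouring of $G-e$ stays valid on the remaining edges. Hence in a minimal counterexample every edge satisfies $c(e)\ge k$, and most reducible configurations will be shown reducible by this local counting.

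\textbf{Large $\D$ ($\D\ge33$, $k=2\D-1$).} Take an edge $uv$. The edges incident with $uv$ number $d(u)+d(v)-2$. The diamond conflicts of $uv$ come from the edges $ab$ such that $\{u,v,a,b\}$ spans at least five edges. Each such $ab$ has one endpoint, say $a$, in $N(u)\cap N(v)$, and $b$ is a common neighbour of $a$ with $u$ or with $v$. By planarity the common neighbourhoods behave like fans. So when $uv$ lies in $t$ triangles, the diamond conflicts number at most roughly $2t+O(1)$, and $t\le\min\{d(u),d(v)\}-1$.

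This gives several reducible configurations:
\begin{itemize}
\item an edge $uv$ with $d(u)+d(v)$ small, say $d(u)\le 5$ and $d(v)\le \D-8$, where the count stays below $2\D-1$;
\item a vertex of degree $\le2$, or a degree-$3$ vertex with a non-maximal neighbour;
\item a light vertex of degree $d\le 6$ whose neighbours are all big but which carries few triangles.
\end{itemize}
For the harder configurations I would recolour a small cluster of edges around a light vertex simultaneously. Each edge of the cluster gets a list of available colours, and a list-assignment argument finishes the job. Here I would use a Hall-type matching, or the Combinatorial Nullstellensatz on the product $\prod(x_e-x_f)$ over conflicting pairs inside the cluster, to show the cluster is colourable.

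To finish the large-$\D$ case, I would run discharging with charges $d(v)-4$ on vertices and $d(f)-4$ on faces. Big vertices, of degree $\ge\D-c$ for a suitable constant $c$, send charge to adjacent light vertices and to $3$-faces. Because $\D\ge33$, a big vertex has enough spare charge after the reducible configurations exclude clusters of light neighbours. The threshold $33$ should come out of balancing these rules.

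\textbf{Small $\D$ ($\D\le4$ with $k=9$, and $\D=5$ with $k=10$).} Here local counting gives $c(e)\le 2(\D-1)+$ (diamond conflicts). For $\D=4$ the worst edge has $6$ incident edges and up to about $6$ diamond conflicts, so greedy fails. I would therefore prove a finite list of reducible configurations using the Nullstellensatz. These are vertices of degree $\le3$, edges both of whose endpoints have degree $4$ but which lie in at most one triangle, and $4$-vertices in specific triangle patterns. For each one I would compute lists of sizes $k$ minus the number of precoloured conflicts, and check a nonvanishing monomial coefficient, by computer if needed. Then I would show that a $4$-regular, resp.\ $5$-regular-ish, planar graph avoiding these configurations cannot exist, via Euler's formula and discharging on faces. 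For $\D=5$ the graph $K_2\vee P_4$ shows that the configurations must be chosen tightly.

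\textbf{Main obstacle.} I expect the hardest part to be the large-$\D$ reducibility at vertices of degree $3$ to $5$ that are adjacent to two maximum-degree vertices sharing a large book. In that configuration the edges conflict with nearly $2\D-2$ others, so the slack is one or two colours. The Nullstellensatz and recolouring must then be done with care; first I would try swapping colours along the book to free a colour.
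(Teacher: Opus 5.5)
\textbf{Your basic reduction is invalid.} You observe, correctly, that deleting an edge creates no new conflicts. But reusing a colouring of $G-e$ needs the reverse inclusion: no conflict among the surviving edges may be destroyed. That fails whenever $e$ is a cross-edge of a diamond. If $e=ac$ is the spine of a diamond on $\{a,b,c,d\}$, then $ab$ and $cd$ see each other in $G$ but not in $G-e$. For example, $F_\Delta-xy=K_{2,\Delta-1}$ is bipartite, so it has $(2\Delta-1)$-D-colourings with $\varphi(xz_i)=\varphi(yz_j)$ for some $i\ne j$. No colour on $xy$ repairs this, although $xy$ sees only $2\Delta-2$ edges.

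So an edge-minimal counterexample need not satisfy $c(e)\ge k$, and none of your counting reductions follows. The paper instead takes a vertex-minimal counterexample and deletes only vertex sets $S$. Since $G-S$ is induced, every cross-edge between two surviving edges survives, so the seeing relation is preserved. The cost is that all edges at a deleted vertex must be coloured together, and then some of your configurations are no longer reducible by extension. In $K_2\vee P_{\Delta-1}$ the end brother $z_1$ is a $3$-vertex with a non-maximal neighbour. In a colouring of $G-z_1$, the book $\mathcal{B}(xy)$ uses $2\Delta-3$ colours, so $x$ and $y$ jointly miss exactly two colours. If $z_2z_3$ carries one of them, both spokes $xz_1,yz_1$ are left with the same single colour.

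\textbf{No unavoidable set is actually produced, and the missing ingredient for $\Delta\ge33$ is exactly the case you leave open.} Your discharging rules are unspecified, and ``$33$ should come out of balancing'' is not an argument. The paper gets unavoidability from the stars-and-bunches lemma of Borodin et al.\ (\cref{lem:structure}), and $33$ comes from that lemma's constants. The configuration you single out, a bunch whose poles share an almost full book, is the hard case. Swapping colours along the book has no room, since all edges of $\mathcal{B}(xy)$ pairwise see each other.

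The paper's key idea is a common palette:
\begin{itemize}
\item Delete two adjacent brothers $z_{p-1},z_p$ next to the parental edge, so $x$ and $y$ each lose two coloured edges.
\item Then $P=\mathcal{C}\setminus(C_H(x)\cup C_H(y))$ has at least $(2\Delta-1)-(2\Delta-5)=4$ colours. These are available on all four pairwise-seeing spokes, except possibly the colour of $z_{p-2}z_{p-3}$.
\item Restore the brother--brother edges afterwards; each sees at most $11$ edges (\cref{lem5.4}).
\item Only when the longer side has three brothers and $N(x)\setminus\{y\}=N(y)\setminus\{x\}$ is $|P|=2$. Then $z_1z_2$ is uncoloured first and recoloured at the end.
\end{itemize}

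\textbf{Your small-$\Delta$ list is likewise avoidable.} The icosahedron ($\Delta=5$) contains none of your configurations. For $\Delta=5$, the paper shows that a $4$- or $5$-vertex is reducible when all triangles are facial, or when it lies inside a minimum positive triangular disk with at most one neighbour on its boundary. Either condition forces $G[N(v)]\subseteq C_d$. The reduction recolours the spokes together with the rim edges, using Nullstellensatz coefficients for $C_4$, $P_5$ and $C_5$, and the proof finishes with the octahedral patch. For $\Delta\le4$ no discharging is needed: every neighbourhood type except $C_4$ is greedy, and all-$C_4$ neighbourhoods force $K_{2,2,2}$.
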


Consequently, \cref{conj1.1} remains open only for
\(
 6\le \D\le32.
\)
In particular, \cref{thm:main} shows that, for sufficiently large
maximum degree, the largest D-chromatic index among planar graphs is
already attained by a single maximum book. 

The three ranges require different methods. For $\D\le4$, we use a minimal counterexample argument and analyze the induced subgraphs of vertex neighborhoods, eventually reducing to the octahedral graph. For
$\D=5$, we develop a local patch recoloring method combining list degeneracy with graph polynomial certificates. For $\D\ge33$, we combine the stars and bunches lemma with blocker support estimates and a common palette recoloring argument for bunches.

The paper is organized as follows. In \cref{sec:prelim}, we introduce the local conflict notation,
establish the extension tools, and state the stars and bunches lemma. The cases $\D\le4$ and $\D=5$ are proved in
\cref{sec:four,sec:five}, respectively. The proof for $\D\ge33$ is given in \cref{sec:large}. 
The code used to verify the graph-polynomial coefficients appears in
Appendix~\ref{app:code}.

\section{Preliminaries}\label{sec:prelim}

Two distinct edges $e,f\in\E(G)$ are said to \emph{see} each other if
they are incident, or if they are nonincident and lie in a common
diamond. Thus a proper edge-coloring is a D-coloring precisely when
every two edges that see each other receive distinct colors.

Since $G-S$ is an induced subgraph, two edges of $G-S$ see each other
in $G-S$ if and only if they see each other in $G$. Indeed, incidence
is unchanged, and for two nonincident edges $ab$ and $cd$, the relevant
cross-edges $ac,ad,bc,bd$ all have both endpoints outside $S$.
Consequently, every D-coloring of $G-S$ may be viewed as a partial
D-coloring of $G$.

For a partial D-coloring $\col$ of $G$ and a vertex $x$, let
$C_\col(x)$ denote the set of colors on the colored edges incident
with $x$. For an uncolored edge $e$, let $L_\col(e)$ denote the set of
colors not used on any colored edge that sees $e$ in $G$. When $\col$
is fixed, we simply write $C(x)$ and $L(e)$.

In each deletion and extension argument, we specify a local set of
edges to be colored or recolored. These are called the \emph{new edges}
(or \emph{patch edges}), even after some of them have been colored during
the extension; the colored edges outside this local set that retain the
inherited coloring are called the \emph{old edges}. For an uncolored edge
$e$, a \emph{blocker} of $e$ is a currently colored edge $f$ that sees
$e$, so the color of $f$ is excluded from $L_\col(e)$. A blocker is
\emph{incident} if it shares an endpoint with $e$, and \emph{nonincident}
otherwise. Thus a nonincident blocker is disjoint from $e$ and lies with
$e$ in a common diamond. An \emph{old blocker} is a blocker that is an
old edge.

We shall repeatedly use the following greedy extension lemma.

\begin{lemma}\label{lem2.1}
Let $e_1,\ldots,e_t$ be pairwise seeing uncolored edges. If they can be
ordered so that
\(
 |L(e_i)|\ge i
 \)
\(
(1\le i\le t),
\)
then they can be colored from their lists with pairwise distinct colors.
\end{lemma}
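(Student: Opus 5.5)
The plan is to color greedily in the reverse of the given order, that is, $e_t, e_{t-1},\ldots,e_1$, and to argue by induction on $t$ (equivalently, on the number of edges already colored). Since the edges $e_1,\ldots,e_t$ pairwise see each other, every color placed on one of them must be removed from the lists of all the others. Thus the only constraint inside the set is that the chosen colors be pairwise distinct. Colors outside the set are already accounted for, because $L(e_i)$ excludes every color on a currently colored blocker.

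The key steps are as follows. First, color $e_t$ with any color from $L(e_t)$, which is possible since $|L(e_t)|\ge t\ge1$. Next, suppose $e_t,\ldots,e_{i+1}$ have been colored with distinct colors from their lists. The edge $e_i$ sees each of these $t-i$ edges, so at most $t-i$ colors of $L(e_i)$ are now forbidden. Here lies the one point where the naive count fails: it gives $|L(e_i)|-(t-i)\ge i-(t-i)$, which need not be positive. So coloring in reverse order does not work directly.

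The fix is to color in the forward order instead. When we come to color $e_i$, exactly $i-1$ edges $e_1,\ldots,e_{i-1}$ are already colored, each with a distinct color. Hence at most $i-1$ colors of $L(e_i)$ are unavailable, and at least $|L(e_i)|-(i-1)\ge1$ colors remain for $e_i$. Coloring $e_1,\ldots,e_t$ in this order, each $e_i$ with a remaining color of $L(e_i)$, gives pairwise distinct colors, each taken from the corresponding list. Each $e_i$ must differ from every earlier $e_j$ because they see each other, and it avoids all old blockers by the definition of $L$. So the resulting partial coloring is still a partial D-coloring.

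There is no real obstacle here. The only point needing care is choosing the order consistently with the hypothesis $|L(e_i)|\ge i$: the forward order matches it, and lists are taken with respect to the coloring before the patch is started.
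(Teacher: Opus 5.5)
Your final argument is correct and is essentially the paper's proof: color $e_1,\ldots,e_t$ in the given order, noting that when $e_i$ is reached at most $i-1$ colors of $L(e_i)$ have been used on earlier edges, so one remains. The abandoned reverse-order attempt in your opening paragraphs is unnecessary and can simply be deleted from a written proof.
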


\begin{proof}
Color $e_1,\ldots,e_t$ successively. When $e_i$ is considered, at most
$i-1$ colors in $L(e_i)$ have been used on the preceding edges, so an
available color remains.
\end{proof}

We shall also use the following standard graph-polynomial consequence of the Combinatorial Nullstellensatz.

\begin{lemma}[Alon~\cite{Alon1999}]\label{lem:alon}
Let $Q$ be a graph whose vertices are ordered as $z_1<\cdots<z_n$, and define
\[
 P_Q(z_1,\ldots,z_n)=\prod_{z_iz_j\in E(Q),\ i<j}(z_i-z_j).
\]
If the coefficient of $\prod_{i=1}^n z_i^{t_i}$ in $P_Q$ is nonzero, then $Q$ is colorable from every list assignment satisfying $|L(z_i)|\ge t_i+1$ for all $i$.
\end{lemma}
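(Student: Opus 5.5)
This is the Combinatorial Nullstellensatz in its standard graph-polynomial form, so I would derive it directly from Alon's theorem. The theorem says the following. Let $P\in\mathbb F[z_1,\ldots,z_n]$ have total degree $\sum_i t_i$, and suppose the coefficient of $\prod_i z_i^{t_i}$ is nonzero. Then for any sets $S_1,\ldots,S_n\subseteq\mathbb F$ with $|S_i|>t_i$, there is a point $(s_1,\ldots,s_n)\in S_1\times\cdots\times S_n$ with $P(s_1,\ldots,s_n)\ne 0$.

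\emph{Step 1: the degree condition.} Every factor $z_i-z_j$ of $P_Q$ is homogeneous of degree $1$, so $P_Q$ is homogeneous of degree $|E(Q)|$. A monomial with nonzero coefficient in $P_Q$ must therefore have $\sum_i t_i=|E(Q)|=\deg P_Q$. This is exactly the hypothesis the Nullstellensatz needs.

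\emph{Step 2: applying the theorem.} Work over $\mathbb{Q}$ or $\mathbb{R}$, taking the colors to be integers. The coefficient is computed over $\mathbb Z$, so "nonzero" is unambiguous in characteristic $0$. Let $L$ be a list assignment with $|L(z_i)|\ge t_i+1$. Take $S_i\subseteq L(z_i)$ with $|S_i|=t_i+1$, or simply $S_i=L(z_i)$. The theorem then gives a choice $s_i\in L(z_i)$ with $P_Q(s_1,\ldots,s_n)\ne0$.

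\emph{Step 3: reading off the coloring.} Nonvanishing of the product means $s_i\ne s_j$ for every edge $z_iz_j$ of $Q$. So the assignment $z_i\mapsto s_i$ is a proper coloring of $Q$ from the lists.

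There is no real obstacle here. The only point to check is the degree condition in Step 1, which homogeneity settles. I would also remark that the vertex ordering affects only the sign of $P_Q$, not whether a given coefficient is zero, so the statement does not depend on the chosen order $z_1<\cdots<z_n$.
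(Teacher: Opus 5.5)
Your derivation is correct. Homogeneity of $P_Q$ supplies the degree hypothesis of the Combinatorial Nullstellensatz, and a point of $\prod_i L(z_i)$ at which $P_Q$ does not vanish is exactly a proper list coloring. The paper gives no proof of its own and simply cites Alon, whose standard derivation is this same argument, with the colors embedded injectively into a field of characteristic $0$.
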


\begin{lemma}\label{lem2.3}
Let $v\in\V(G)$, put $U=\N_G(v)$ and $H=G-v$, and fix $u\in U$. If an
old edge $f\in\E(H)$ sees $vu$, then $f$ has an endpoint in
\(
 W_u:=\{u\}\cup\N_{G[U]}(u).
\)
Consequently, the number of old edges seeing $vu$ is at most
\(
 \sum_{z\in W_u}d_H(z)-e_H(W_u).
\)
\end{lemma}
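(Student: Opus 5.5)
The plan is to prove the two assertions separately: first the structural claim, by a short case analysis on how an old edge can see $vu$, and then the counting bound, by charging each old edge to an endpoint in $W_u$.

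\textbf{Structural claim.} Let $f\in\E(H)$ be an old edge that sees $vu$. Since $f$ lies in $H=G-v$, it is not incident with $v$.

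\emph{Case 1: $f$ is incident with $vu$.} Then $f$ must contain $u$, so $f$ has the endpoint $u\in W_u$.

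\emph{Case 2: $f=ab$ is nonincident with $vu$.} Then $f$ and $vu$ are opposite edges of a common diamond, so at least three of the four cross-edges $va,vb,ua,ub$ lie in $\E(G)$. Three of four forces both pairs to contribute: at least one of $va,vb$ is present, and at least one of $ua,ub$ is present. More precisely, if exactly one of $va,vb$ were present, both $ua,ub$ would have to be present. Hence, in every configuration, some endpoint $z\in\{a,b\}$ satisfies both $vz\in\E(G)$ and $uz\in\E(G)$. Such a $z$ lies in $U$ and is adjacent to $u$, so $z\in\N_{G[U]}(u)\subseteq W_u$.

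\textbf{Counting bound.} By the structural claim, every old edge seeing $vu$ has at least one endpoint in $W_u$. Note also that $W_u\subseteq U\subseteq\V(H)$. The number of edges of $H$ with at least one endpoint in $W_u$ equals
\[
 \sum_{z\in W_u}d_H(z)-e_H(W_u),
\]
because an edge with both endpoints in $W_u$ is counted twice in the degree sum and is subtracted once. This expression therefore bounds the number of old edges seeing $vu$.

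\textbf{Main obstacle.} No step is genuinely difficult. The only point needing care is the diamond case: one must verify that "three of four cross-edges" always produces a single endpoint adjacent to both $v$ and $u$. This is exactly the pigeonhole observation in Case 2.
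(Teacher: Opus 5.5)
Your proposal is correct and follows the paper's proof essentially step for step. In both, an incident blocker must contain $u$. In both, a nonincident blocker, having three of the four cross-edges, has an endpoint adjacent to both $v$ and $u$. In both, the bound is the inclusion--exclusion count of edges of $H$ meeting $W_u$. Your case analysis for the three-of-four step is slightly more verbose than pairing the cross-edges by endpoint, but it is valid.
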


\begin{proof}
The assertion is clear if $f$ is incident with $u$. Otherwise, write
$f=ab$. Since $f$ sees $vu$, at least three of
\(
 va,\ vb,\ ua,\ ub
\)
are edges of $G$. Hence one of $a,b$ is adjacent to both $v$ and $u$, and so belongs to $\N_{G[U]}(u)$. Thus $f$ has an endpoint in $W_u$. The number of edges of $H$ having an endpoint in $W_u$ is
\[
 e_H(W_u)+e_H(W_u,\V(H)\setminus W_u)
 =\sum_{z\in W_u}d_H(z)-e_H(W_u),
\]
as required.
\end{proof}

We shall also use the following immediate observation. For
$xy\in\E(G)$, let
\[
 Z=\N_G(x)\cap\N_G(y)
 \quad\text{and}\quad
 \cB(xy)=\{xy\}\cup\{xz,yz:z\in Z\}.
\]
The edges of $\cB(xy)$ are pairwise seeing and hence receive distinct
colors in every D-coloring. Indeed, for distinct $z,w\in Z$, the edges
$xz$ and $yw$ are opposite edges of a diamond containing the three
cross-edges $xy,xw,yz$.

\begin{lemma}\label{lem2.4}
If $G$ is planar and $x\in\V(G)$, then $G[\N_G(x)]$ is outerplanar.
Consequently, it contains neither a $K_4$-minor nor a $K_{2,3}$-minor,
and any two of its vertices have at most two common neighbors.
\end{lemma}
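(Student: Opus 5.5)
The plan is the standard apex-vertex argument. Let $G$ be planar and $x\in\V(G)$, and put $H=G[\N_G(x)]$. I will show that $H$ has a planar embedding in which every vertex lies on the boundary of a single face; this is exactly outerplanarity. Fix a plane embedding of $G$ and delete every vertex outside $\{x\}\cup\N_G(x)$, together with every edge at $x$'s neighbours that leaves this set. The result is a plane embedding of $G[\{x\}\cup\N_G(x)]$. Removing $x$ merges all faces incident with $x$ into one face $f$ of the induced embedding of $H$. Every neighbour of $x$ was joined to $x$ by an edge lying inside the region that becomes $f$, so every vertex of $H$ lies on the boundary of $f$. This gives the required embedding.

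An alternative, purely combinatorial route is also available. If $H$ contained a $K_4$-minor or a $K_{2,3}$-minor, then adding $x$, which is adjacent to every vertex of $H$, would give a $K_5$-minor or a $K_{3,3}$-minor of $G$ respectively. This contradicts Wagner's theorem, since $G$ is planar. The point is that $x$ can be added to the minor as a new branch set: for $K_4$ it becomes adjacent to all four branch sets, and for $K_{2,3}$ it joins the side of size $2$, since it is adjacent to the three branch sets on the other side. I will mention this argument because it gives the "consequently" part directly, and it is what I will use to derive the last claim.

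For the final claim, suppose two vertices $a,b$ of $H$ had three common neighbours $c_1,c_2,c_3$ in $H$. Then $\{a,b\}$ and $\{c_1,c_2,c_3\}$ span a $K_{2,3}$ subgraph of $H$, which is in particular a $K_{2,3}$-minor. This is excluded. Note that $a$ and $b$ may or may not be adjacent; that does not matter, since we only need the subgraph.

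There is no substantial obstacle here. The only points needing care are:
\begin{itemize}
\item in the minor argument, checking that $x$ is adjacent to each branch set, which is immediate because each branch set contains a vertex of $H=G[\N_G(x)]$;
\item in the topological argument, making sure the deletion of $x$ really places all of $\N_G(x)$ on one common face.
\end{itemize}
I would present the minor argument as the main proof, since it is the shortest, and the common-neighbour statement then follows in one line.
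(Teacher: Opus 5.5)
Your proof is correct and rests on the same apex-vertex idea as the paper, which simply cites the cone characterization ($K_1\vee H$ planar implies $H$ outerplanar) and the forbidden-minor characterization of outerplanar graphs. Your embedding argument, and your lifting of a $K_4$- or $K_{2,3}$-minor of $H$ to a $K_5$- or $K_{3,3}$-minor of $G$, prove exactly the directions of those facts that are needed. Keep the embedding argument in the write-up, though: the minor argument alone gives outerplanarity only through the harder converse direction of the forbidden-minor characterization.
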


\begin{proof}
Since
\(
 K_1\vee G[N_G(x)]\cong G[N_G[x]]
\)
is planar, the cone characterization of outerplanar graphs implies that
$G[N_G(x)]$ is outerplanar; see~\cite{Felsner2004}. The remaining
assertions follow from the forbidden-minor characterization of
outerplanar graphs: two vertices with three common neighbors would
yield a $K_{2,3}$ subgraph.
\end{proof}

As an immediate consequence of \cref{lem2.4}, the three vertices of
a triangle in a planar graph have at most two common neighbors.

\begin{lemma}\label{lem2.5}
If $G$ is planar and $xy\in\E(G)$, then
\(
 G[Z]
\)
is a linear forest.
\end{lemma}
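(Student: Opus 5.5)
Here $Z=\N_G(x)\cap\N_G(y)$ as in the paragraph before \cref{lem2.4}, and a linear forest is a forest each of whose components is a path. So I plan to show two things about $G[Z]$: it is acyclic, and every vertex of $G[Z]$ has degree at most $2$. Both will come from applying \cref{lem2.4} to a suitable neighborhood, together with the planarity of $G$ itself.

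For the degree bound, fix $z\in Z$. Every $w\in Z$ is adjacent to both $x$ and $y$. So any neighbor of $z$ inside $Z$ is a common neighbor of the triangle $xyz$. The consequence of \cref{lem2.4} stated above says a triangle has at most two common neighbors. Hence $d_{G[Z]}(z)\le 2$.

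For acyclicity, suppose $G[Z]$ contains a cycle $C=w_1w_2\cdots w_kw_1$ with $k\ge 3$. Every vertex of $C$ is adjacent to $x$, so $C$ lies in $G[\N_G(x)]$. The vertex $y$ also lies in $\N_G(x)$ and is adjacent to every $w_i$. Thus $G[\N_G(x)]$ contains the wheel $K_1\vee C_k$, with hub $y$. Contracting the rim of this wheel onto three of its vertices produces $K_1\vee K_3=K_4$. This contradicts the absence of a $K_4$-minor in \cref{lem2.4}.

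As an alternative route, $G[\{x,y\}\cup V(C)]$ contains $K_2\vee C_k$, which has a $K_5$-minor (contract the cycle onto a triangle). This contradicts planarity directly. Either way $G[Z]$ is a forest of maximum degree at most $2$, which is a linear forest. I do not expect any step to be a real obstacle. The only care needed is to state the minor argument cleanly, in particular that contracting edges of a cycle $C_k$ with $k\ge3$ yields a triangle while the hub stays adjacent to everything.
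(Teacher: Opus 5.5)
Your proposal is correct and follows essentially the paper's proof: the degree bound comes from the triangle $xyz$ having at most two common neighbors (\cref{lem2.4}), and acyclicity comes from contracting a cycle of $G[Z]$ onto a triangle. The paper uses exactly your ``alternative'' $K_5$-minor route with $x$ and $y$; your primary version applies the same contraction to get a $K_4$-minor (wheel with hub $y$) inside the outerplanar graph $G[N_G(x)]$, which is only a repackaging of that argument.
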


\begin{proof}
For each $z\in Z$, the vertices $x,y,z$
form a triangle, and every neighbor of $z$ in $G[Z]$ is a common
neighbor of this triangle. Hence
\(
 \D(G[Z])\le2
\)
by \cref{lem2.4}.

Suppose that $G[Z]$ contains a cycle $C$. Partition $V(C)$ into three
nonempty consecutive intervals along $C$ and contract each interval to
a single vertex. The resulting three vertices form a triangle and are
all adjacent to both $x$ and $y$. Together with the edge $xy$, they
form a $K_5$-minor, contradicting planarity. Thus $G[Z]$ is acyclic
and has maximum degree at most $2$, and hence is a linear forest.
\end{proof}

Let $G$ be a plane graph. A \emph{bunch} $B(x,y;m)$ consists of $m$
paths $Q_1,\ldots,Q_m$, each of length $1$ or $2$, joining two poles
$x,y$, such that the cycle formed by consecutive paths $Q_i,Q_{i+1}$
is nonseparating and the sequence is maximal. If $Q_i=xz_i y$, then
$z_i$ is a \emph{brother}; if $Q_i=xy$, then $xy$ is a \emph{parental
edge}. A brother $z_i$ is internal when $2\le i\le m-1$, and strictly
internal when $3\le i\le m-2$. When a parental edge is present, its
two sides refer to the two directions in the cyclic order of the bunch
paths. See \cref{fig:bunch}.

We shall use the following standard geometric property of bunches
\cite{Borodin2001}.

\begin{lemma}\label{lem:bunch-geometry}
Every internal brother $z_i$ has
$N_G(z_i)\subseteq\{x,y,z_{i-1},z_{i+1}\}$ and hence degree at most
four. If $z_i$ is strictly internal, each brother adjacent to $z_i$ is
internal.
\end{lemma}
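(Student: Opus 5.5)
The plan is to argue from the plane embedding of the bunch $B(x,y;m)$. The paths $Q_1,\ldots,Q_m$ are drawn in cyclic order around the two poles. For $1\le i<m$, let $D_i$ be the closed cycle formed by the consecutive paths $Q_i$ and $Q_{i+1}$. Each such cycle has length $3$ or $4$ and is nonseparating by definition. Hence, once the orientation is fixed, the side of $D_i$ lying between $Q_i$ and $Q_{i+1}$ contains no vertex of $G$, and so that region is a face of $G$. For an internal brother $z_i$ with $2\le i\le m-1$, both $D_{i-1}$ and $D_i$ exist. The curve $Q_i=xz_iy$ is the common boundary of these two regions. In the rotation at $z_i$, the edges $z_ix$ and $z_iy$ therefore separate a neighbourhood of $z_i$ into two sectors, one lying in the face bounded by $D_{i-1}$ and the other in the face bounded by $D_i$.

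The first key step is to deduce $N_G(z_i)\subseteq\{x,y,z_{i-1},z_{i+1}\}$. Suppose $w$ is a neighbour of $z_i$ with $w\notin\{x,y\}$. The edge $z_iw$ leaves $z_i$ into one of the two sectors, say the one inside $D_i$. That region is a face, so it contains no points of $G$ in its interior. The edge $z_iw$ must therefore be a chord of that face, drawn inside it, with $w$ on its boundary $D_i$. The boundary vertices of $D_i$ are $x$, $y$, $z_i$ and, if $Q_{i+1}$ has length $2$, also $z_{i+1}$. So $w=z_{i+1}$, and symmetrically $w=z_{i-1}$ in the other case. One subtlety is that a face with a chord drawn inside is no longer a face. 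The cleaner phrasing is that the open disc bounded by $D_i$ on the bunch side contains no vertex, so any edge starting into it must end on $D_i$. The degree bound $d_G(z_i)\le4$ is then immediate.

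The second step handles strictly internal brothers. Let $z_i$ be strictly internal, so $3\le i\le m-2$, and let $z_j$ be a brother adjacent to $z_i$. By the first step, $z_j\in\{z_{i-1},z_{i+1}\}$, so $j\in\{i-1,i+1\}\subseteq\{2,\ldots,m-1\}$. Hence $z_j$ is internal. Here one must check that "brother adjacent to $z_i$" cannot mean some other brother $z_k$ that happens to coincide as a vertex with $z_{i\pm1}$. This cannot happen, because the paths are distinct and the brothers are distinct vertices.

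I expect the main obstacle to be making the topological claim rigorous: that the region between consecutive paths is vertex-free and that an edge entering it must end on its boundary. I would handle this using the Jordan curve theorem together with the definition of nonseparating. Since $D_i$ does not separate $G$, all vertices off $D_i$ lie on one side, namely the side containing the other paths. This side is indeed the "outer" side when $m\ge3$, since $Q_{i-1}$ or $Q_{i+2}$ exists there. If the needed facts are not derivable cleanly from the definition, I would fall back on citing \cite{Borodin2001} for the standard convention that consecutive bunch paths bound faces.
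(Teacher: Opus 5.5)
The paper gives no proof of this lemma. It quotes it as a standard geometric property of bunches from Borodin et al., so your Jordan-curve argument supplies a proof where the paper relies on a citation. Your argument is correct.

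Write $R_{i-1},R_i$ for the vertex-free regions between $Q_{i-1},Q_i$ and between $Q_i,Q_{i+1}$. These are two distinct faces of the theta graph $Q_{i-1}\cup Q_i\cup Q_{i+1}$. Hence they lie on opposite sides of $Q_i$ and fill both sectors at $z_i$. Any further edge at $z_i$ starts into one of them and cannot cross its boundary. It therefore ends at a boundary vertex, namely $z_{i-1}$ or $z_{i+1}$ (the latter absent if $Q_{i+1}=xy$). The strictly-internal clause is then index bookkeeping.

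Compared with the citation, your route makes explicit exactly what the definition must provide. The vertex-free side of each $Q_i\cup Q_{i+1}$ must be the side lying \emph{between} the two paths, i.e.\ the side not containing the other bunch paths.

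Keep this as part of the definition, as you do in your first paragraph, rather than deriving it as in your fallback. Bare nonseparation, in either sense, does not force it when the only remaining path is the parental edge, because that path carries no vertex to fix the side. Here is a counterexample:
\begin{itemize}
\item take $m=3$, with $Q_1=xy$ drawn as a diagonal inside the $4$-cycle $Q_2\cup Q_3$;
\item add a vertex $w$ outside that $4$-cycle, adjacent to $z_2$ and $z_3$.
\end{itemize}
Both consecutive cycles then have a vertex-free side, and $G-V(Q_i\cup Q_{i+1})$ is connected for $i=1,2$. Also, three paths are always consistent with some orientation of the cyclic order. Yet the internal brother $z_2$ has the neighbour $w$. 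So your sentence that all vertices off $D_i$ lie on the side containing the other paths is false in this situation.

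For $m\ge4$ your fallback does go through, together with the cyclic order. Some remaining path then has a middle vertex, and that vertex lies on the non-between side. This covers the paper's only use of the lemma, where $m\ge6$.
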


\begin{figure}[htbp]
\centering

\begin{subfigure}{0.47\textwidth}
\centering
\begin{tikzpicture}[
  scale=.72,
  v/.style={
    circle,
    draw,
    fill=white,
    minimum size=3.2mm,
    inner sep=0pt
  },
  lab/.style={
    font=\scriptsize,
    fill=white,
    inner sep=.4pt
  }
]
  \node[v] (x) at (0,3.2) {};
  \node[v] (y) at (0,-3.2) {};

  \node[v] (z1) at (-4.0,0) {};
  \node[v] (z2) at (-2.5,0) {};
  \node[v] (z3) at (-1.0,0) {};
  \node[v] (z4) at ( 1.0,0) {};
  \node[v] (z5) at ( 2.5,0) {};
  \node[v] (z6) at ( 4.0,0) {};

  \foreach \i in {1,2,3,4,5,6}{
    \draw (x)--(z\i)--(y);
  }

  \draw[dashed] (z1)--(z2)--(z3);
  \draw[dashed] (z4)--(z5)--(z6);

  \node[lab,anchor=south,yshift=2pt] at (x.north) {$x$};
  \node[lab,anchor=north,yshift=-2pt] at (y.south) {$y$};

  \node[
    lab,
    anchor=south east,
    xshift=-1pt,
    yshift=1pt
  ] at (z1.north west) {$z_1$};

  \node[
    lab,
    anchor=south east,
    xshift=-1pt,
    yshift=1pt
  ] at (z2.north west) {$z_2$};

  \node[
    lab,
    anchor=south east,
    xshift=-1pt,
    yshift=1pt
  ] at (z3.north west) {$z_3$};

  \node[
    lab,
    anchor=south west,
    xshift=1pt,
    yshift=1pt
  ] at (z4.north east) {$z_{m-2}$};

  \node[
    lab,
    anchor=south west,
    xshift=1pt,
    yshift=1pt
  ] at (z5.north east) {$z_{m-1}$};

  \node[
    lab,
    anchor=south west,
    xshift=1pt,
    yshift=1pt
  ] at (z6.north east) {$z_m$};

  \node[lab] at (0,0) {$\cdots$};
\end{tikzpicture}
\caption{No parental edge.}
\end{subfigure}
\hfill
\begin{subfigure}{0.47\textwidth}
\centering
\begin{tikzpicture}[
  scale=.72,
  v/.style={
    circle,
    draw,
    fill=white,
    minimum size=3.2mm,
    inner sep=0pt
  },
  lab/.style={
    font=\scriptsize,
    fill=white,
    inner sep=.4pt
  }
]
  \node[v] (x) at (0,3.2) {};
  \node[v] (y) at (0,-3.2) {};

  \node[v] (z1) at (-4.0,0) {};
  \node[v] (z2) at (-2.5,0) {};
  \node[v] (z3) at (-1.0,0) {};
  \node[v] (z4) at ( 1.0,0) {};
  \node[v] (z5) at ( 2.5,0) {};
  \node[v] (z6) at ( 4.0,0) {};

  \foreach \i in {1,2,3,4,5,6}{
    \draw (x)--(z\i)--(y);
  }

  \draw[very thick] (x)--(y);
  \draw[dashed] (z1)--(z2)--(z3);
  \draw[dashed] (z4)--(z5)--(z6);

  \node[lab,anchor=south,yshift=2pt] at (x.north) {$x$};
  \node[lab,anchor=north,yshift=-2pt] at (y.south) {$y$};

  \node[
    lab,
    anchor=south east,
    xshift=-1pt,
    yshift=1pt
  ] at (z1.north west) {$z_1$};

  \node[
    lab,
    anchor=south east,
    xshift=-1pt,
    yshift=1pt
  ] at (z2.north west) {$z_2$};

  \node[
    lab,
    anchor=south east,
    xshift=-1pt,
    yshift=1pt
  ] at (z3.north west) {$z_3$};

  \node[
    lab,
    anchor=south west,
    xshift=1pt,
    yshift=1pt
  ] at (z4.north east) {$z_{m-2}$};

  \node[
    lab,
    anchor=south west,
    xshift=1pt,
    yshift=1pt
  ] at (z5.north east) {$z_{m-1}$};

  \node[
    lab,
    anchor=south west,
    xshift=1pt,
    yshift=1pt
  ] at (z6.north east) {$z_m$};

  \node[lab,xshift=8pt] at (0,0) {$\cdots$};
\end{tikzpicture}
\caption{With a parental edge.}
\end{subfigure}

\caption{A bunch $B(x,y;m)$ in a plane graph.}
\label{fig:bunch}
\end{figure}


We use the following consequence of the stars-and-bunches theorem
of Borodin et al.~\cite{Borodin2001}. Taking a vertex to be big
when its degree is at least $26$, their theorem yields either a
precomplete star of weight at most $38$ whose prescribed neighbors
have degree at most $25$, or a bunch with a pole $x$ and at least
$d_G(x)/5$ paths. This gives the following formulation.

\begin{lemma}[Borodin et al.~\cite{Borodin2001}; see also~\cite{Kong2026}]\label{lem:structure}
Let $G$ be a plane graph with $\delta(G)\ge2$. Then $G$ contains one of the following configurations.
\begin{enumerate}[label=\textup{(B\arabic*)}]
\item A $k$-vertex $v$, where $2\le k\le5$, with neighbors $v_1,\ldots,v_k$ such that
\(
 d_G(v_i)\le25 (1\le i\le k-1),
 \)
\(
 \sum_{i=1}^{k-1}d_G(v_i)\le38.
\)
\item A bunch $B(x,y;m)$ satisfying
\(
 d_G(x)\ge26,
 \)
\(
 m\ge \frac{d_G(x)}5.
\)
\end{enumerate}
\end{lemma}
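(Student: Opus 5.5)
The plan is to obtain the statement as a specialisation of the stars-and-bunches theorem of Borodin et al.~\cite{Borodin2001}, rather than to re-run the discharging argument. I recall the form of that theorem I intend to use. Fix a threshold, call a vertex \emph{big} if its degree is at least the threshold and \emph{small} otherwise, and let $G$ be a plane graph with $\delta(G)\ge2$. Then $G$ contains either
\begin{itemize}
\item a precomplete star: a vertex $v$ of degree $k\le5$ together with $k-1$ of its neighbours, all small, whose degrees sum to at most a constant (its weight), or
\item a bunch $B(x,y;m)$ with a big pole $x$ and $m$ at least a fixed fraction of $d_G(x)$.
\end{itemize}
With the threshold set to $26$, the constants in~\cite{Borodin2001} (as also used in~\cite{Kong2026}) are weight $38$ and fraction $1/5$.

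The translation into (B1) and (B2) then goes in three steps.
\begin{enumerate}
\item In the first alternative I label the neighbours of $v$ so that the $k-1$ prescribed ones are $v_1,\ldots,v_{k-1}$, and $v_k$ is the remaining, possibly big, one. Since the prescribed neighbours are small, $d_G(v_i)\le25$ for $i\le k-1$, and the weight bound gives $\sum_{i=1}^{k-1}d_G(v_i)\le38$.
\item Since $\delta(G)\ge2$, we have $k\ge2$, which gives (B1).
\item In the second alternative, bigness of the pole gives $d_G(x)\ge26$ and the length bound gives $m\ge d_G(x)/5$, which is (B2). The maximality and nonseparating conditions built into the paper's definition of a bunch match those in~\cite{Borodin2001}, so nothing needs to be added there.
\end{enumerate}

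The main obstacle is making sure the constants really are $26$, $38$ and $1/5$ in the version being quoted. Borodin et al.\ state their result with general parameters, and one must check that the chosen threshold is admissible. If I had to verify this from scratch, I would use the standard discharging argument.
\begin{itemize}
\item Give each vertex charge $d(v)-4$ and each face charge $d(f)-4$, so that by Euler's formula the total charge is $-8$.
\item Let each small vertex of degree at most $5$ that lies in no light star receive charge from its big neighbours, in amounts depending on the face sizes around it.
\item Show that a big vertex $x$ with no bunch of $\ge d(x)/5$ paths has its neighbourhood split into enough separate bunches and nontriangular gaps that it can pay out everything it owes while keeping charge at least $0$.
\end{itemize}
The delicate accounting is at vertices of degree $2$ or $3$ that are adjacent to two big vertices. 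There the threshold $26$ and the weight $38$ have to be balanced against each other, and this is exactly where the numerical tuning from~\cite{Borodin2001} would be imported rather than reproved.
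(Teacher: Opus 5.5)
Your proposal is correct and follows the paper's approach. The paper also obtains this lemma by specializing the stars-and-bunches theorem of Borodin et al.\ with big-vertex threshold $26$, reading off (B1) from the precomplete star of weight at most $38$ whose prescribed neighbours have degree at most $25$, and (B2) from the bunch with at least $d_G(x)/5$ paths; your discharging sketch is not needed, since the paper likewise imports these constants from the cited theorem rather than reproving them.
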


\section{The proof of $\Delta\le 4$}\label{sec:four}

Throughout this section, suppose that the first case of \cref{thm:main} is false, and let $G$ be a counterexample with the minimum number of vertices. We may assume that $G$ is connected.

\begin{lemma}\label{lem3.1}
The graph $G$ is $4$-regular.
\end{lemma}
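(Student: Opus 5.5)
We argue by contradiction: suppose $G$ has a vertex $v$ with $d_G(v)=k\le3$. Put $H=G-v$. By minimality of $G$ (and since $H$ is planar with $\D(H)\le4$), $H$ has a D-coloring with colors from $[9]$. By the preliminary remark, this coloring is a partial D-coloring of $G$. It therefore suffices to extend it to the $k$ new edges $vu$, $u\in U=\N_G(v)$. These new edges are pairwise incident, so they pairwise see each other. By \cref{lem2.1}, it is enough to show that, after ordering them suitably, $|L(vu_i)|\ge i$. Since $k\le3$, the simplest target is $|L(vu)|\ge3$ for every $u\in U$, i.e.\ at most $6$ old blockers per new edge.

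To bound the old blockers of $vu$, we use \cref{lem2.3}. Every old edge seeing $vu$ has an endpoint in $W_u=\{u\}\cup\N_{G[U]}(u)$. Their number is at most
\[
\sum_{z\in W_u}d_H(z)-e_H(W_u).
\]
Each $z\in U$ has $d_H(z)=d_G(z)-1\le3$. We split by $s=|\N_{G[U]}(u)|\le k-1\le2$.
\begin{itemize}
\item If $s=0$, there are at most $3$ old blockers.
\item If $s=1$, say $u\sim w$, then $W_u=\{u,w\}$ with $e_H(W_u)\ge1$, giving at most $3+3-1=5$.
\item If $s=2$, then $k=3$ and $W_u=U$ with $e_H(U)\ge2$, giving at most $9-2=7$.
\end{itemize}
This last count gives only $|L(vu)|\ge2$, so it needs separate treatment.

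The main obstacle is thus $k=3$ with $u$ adjacent to both other neighbors $w_1,w_2$ of $v$. First refine the count. If $G[U]$ is a triangle, then $G[N[v]]\cong K_4$ and $e_H(U)=3$, so each new edge has at most $9-3=6$ old blockers and $|L|\ge3$ for all three edges, and we are done. Otherwise $G[U]$ is the path $w_1uw_2$. In that case only $vu$ can have $s=2$, while $vw_1$ and $vw_2$ have $s=1$ and hence $|L|\ge4$. Ordering $vu$ first (with $|L(vu)|\ge2\ge1$) and then $vw_1,vw_2$ (with lists of size at least $4\ge2,3$) satisfies \cref{lem2.1}.

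It remains to check the overlooked case $k\le2$. There $s\le1$ and every list has size at least $4$, so this case is immediate. Finally, $\D(G)\le4$ by hypothesis, and we have shown $\delta(G)\ge4$, so $G$ is $4$-regular. (Connectedness is not needed here. The one point to double-check is that \cref{lem2.3} counts all blockers of $vu$, including those incident with $u$ and those in diamonds through $v$; its proof shows this.)
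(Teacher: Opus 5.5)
Your proof is correct and follows essentially the same route as the paper: delete a vertex of degree at most $3$, bound the lists of the new edges, and finish with \cref{lem2.1}. The only difference is the count for an edge $vu$ whose endpoint $u$ is adjacent to both other neighbors $w_1,w_2$ of $v$. The paper counts directly. Every nonincident blocker must be one of $w_1w_2$, $w_1y$, $w_2y$, where $y$ is the possible fourth neighbor of $u$, and together with at most three old edges at $u$ this gives $|L(vu)|\ge3$ uniformly. Your coarser \cref{lem2.3} bound gives only $|L(vu)|\ge2$ in the path case, which you correctly repair with the triangle/path split and by coloring $vu$ first.
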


\begin{proof}
Let $u$ be a vertex of degree $d(u)\le3$, and let $\col$ be a $9$-D-coloring of $G-u$. Fix $x\in\N(u)$ and put
\(
 r=|\N(u)\cap\N(x)|.
\)
At most $d_G(x)-1\le3$ old edges incident with $x$ see $ux$.

If $r=0$, then there is no nonincident blocker, and hence $|L(ux)|\ge6$.

If $r=1$, let $z$ be the common neighbor. Since $z$ is already adjacent to $u,x$ and has degree at most $4$, at most two edges at $z$ can be nonincident blockers. Thus $|L(ux)|\ge9-(3+2)=4$.

If $r=2$, then $d(u)=3$. Let $\N(u)=\{x,z_1,z_2\}$, and let $y$ be
the possible neighbor of $x$ outside $\{u,z_1,z_2\}$, if such a
neighbor exists. Every nonincident blocker is one of
\(
 z_1z_2, z_1y, z_2y,
\)
where the terms involving $y$ are omitted if $y$ does not exist. Thus
there are at most three such blockers, and
$|L(ux)|\ge9-(3+3)=3$.

Accordingly, each uncolored edge incident with $u$ has at least three
available colors. Hence \cref{lem2.1} extends the coloring to $G$,
contradicting the choice of $G$. Therefore $\delta(G)\ge4$; as
$\D(G)\le4$, the graph $G$ is $4$-regular.
\end{proof}

\begin{lemma}\label{lem3.2}
Let $G$ be a $4$-regular planar graph, let $u\in\V(G)$, and let
$\col$ be a D-coloring of $G-u$ with colors from $[9]$. Put
\(
 H=G[\N_G(u)].
\)
For $x\in\N_G(u)$, let
\(
 r=d_H(x)=|\N_G(u)\cap\N_G(x)|,
\)
and let $L_\col(ux)$ be the set of colors not used on any
$\col$-colored edge that sees $ux$ in $G$. Then
\[
\begin{array}{c|c|c}
 r & \text{condition on }N_H(x) & |L_\col(ux)| \\ \hline
 0 & \text{none} & \ge 6\\
 1 & \text{none} & \ge 4\\
 2 & \text{the two neighbors of $x$ in $H$ are adjacent} & \ge 3\\
 2 & \text{the two neighbors of $x$ in $H$ are nonadjacent} & \ge 2\\
 3 & \text{none} & \ge 4
\end{array}
\]
\end{lemma}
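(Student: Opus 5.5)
The plan is to bound directly the number of colored edges of $G-u$ that see $ux$, and then subtract from $9$. First I record which edges these can be. All edges at $u$ are uncolored, so the incident blockers of $ux$ are exactly the three edges at $x$ other than $ux$. This contributes $3$ in every case, by $4$-regularity.

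Next I characterize the nonincident blockers, refining \cref{lem2.3}. Let $f=ab$ be a nonincident blocker. At least three of $ua,ub,xa,xb$ are edges, so some endpoint, say $a$, lies in $\N_H(x)=\N(u)\cap\N(x)$. The other endpoint $b\notin\{u,x\}$ must then be adjacent to $u$ or to $x$. Hence every nonincident blocker is an edge $ab$ with
\[
 a\in \N_H(x),\qquad b\in(\N(u)\cup\N(x))\setminus\{u,x\}.
\]
Each $a\in\N_H(x)$ already uses two of its four edges on $au$ and $ax$, so it contributes at most two such edges.

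The cases then go as follows.
\begin{itemize}
\item If $r=0$, there are no nonincident blockers, so $|L_\col(ux)|\ge 9-3=6$.
\item If $r=1$, there are at most $2$ nonincident blockers, so $|L_\col(ux)|\ge 9-3-2=4$.
\item If $r=2$ with $\N_H(x)=\{a,b\}$, the vertices $a$ and $b$ contribute at most $2+2$ edges. When $ab\in\E(G)$, the edge $ab$ is counted from both sides, so there are at most $3$ distinct nonincident blockers and $|L_\col(ux)|\ge 9-6=3$. Otherwise the count is at most $4$ and $|L_\col(ux)|\ge 2$.
\end{itemize}

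The case $r=3$ is the only one needing more than counting. Here $\N(x)=\{u,a,b,c\}$ with $a,b,c\in\N(u)$, and by degree $4$ we get $\N(u)=\{x,a,b,c\}$. Thus $(\N(u)\cup\N(x))\setminus\{u,x\}=\{a,b,c\}$, so the nonincident blockers are exactly the edges of $G[\{a,b,c\}]$. If all three such edges were present, then together with $x$ (adjacent to $a,b,c$) the graph $H$ would contain a $K_4$. This contradicts the outerplanarity of $H$ from \cref{lem2.4}. Hence there are at most $2$ nonincident blockers, and $|L_\col(ux)|\ge 9-3-2=4$.

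The main care point is not to overcount: I need the exact characterization of nonincident blockers, which is sharper than the count in \cref{lem2.3}, and I need the identification of the shared edge $ab$ in the adjacent $r=2$ case.
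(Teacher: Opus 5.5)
Your proof is correct and is essentially the paper's argument: exactly three incident blockers at $x$, every nonincident blocker passing through a common neighbor in $N_H(x)$ with at most two further edges (the edge between the two common neighbors counted once in the adjacent $r=2$ case), and for $r=3$ the exclusion of a triangle on $N(u)\setminus\{x\}$. The differences are cosmetic. You describe the nonincident blockers uniformly instead of listing them as the paper does, and you rule out the triangle via a $K_4$ in the outerplanar graph $H$ rather than a $K_5$ on $N[u]$.
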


\begin{proof}
The cases $r=0,1$ were established in the proof of \cref{lem3.1}. Suppose $r=2$, with common neighbors $z_1,z_2$. Let $w$ be the fourth neighbor of $u$, and let $y$ be the unique neighbor of $x$ outside $\N[u]$. Obviously, the nonincident blockers are among
\[
 z_1z_2,\ z_1w,\ z_2w,\ z_1y,\ z_2y. \tag{3.1}
\]
If $z_1z_2\in\E(G)$, then each $z_i$ $(i=1,2)$ is already adjacent to $u,x,z_{3-i}$ and can be incident with at most one further edge in (3.1). Hence there are at most three nonincident blockers and $|L(ux)|\ge3$. If $z_1z_2\notin\E(G)$, then (3.1) contains at most four blockers and $|L(ux)|\ge2$.

Now let $r=3$. All nonincident blockers lie in $G[\N(u)\setminus\{x\}]$. If that graph were a triangle, then $G[\N[u]]$ would be $K_5$, which is impossible. Thus there are at most two nonincident blockers. Together with the three old edges at $x$, this gives $|L(ux)|\ge4$.
\end{proof}

Up to isomorphism, there are eleven simple graphs on four vertices in $H$. The type $K_4$ cannot occur as $H$, since it would give a $K_5$ on $\N[u]$. The remaining ten possibilities and the list lower bounds from \cref{lem3.2} are displayed in \cref{tab3.1}.

\begin{table}[htbp]
\centering
\small
\renewcommand{\arraystretch}{1.12}
\begin{tabular}{c c c}
\toprule
$H$ & degree sequence & sorted list lower bounds\\
\midrule
$4K_1$ & $(0,0,0,0)$ & $(6,6,6,6)$\\
$K_2\cup2K_1$ & $(0,0,1,1)$ & $(4,4,6,6)$\\
$P_3\cup K_1$ & $(0,1,1,2)$ & $(2,4,4,6)$\\
$2K_2$ & $(1,1,1,1)$ & $(4,4,4,4)$\\
$K_3\cup K_1$ & $(0,2,2,2)$ & $(3,3,3,6)$\\
$K_{1,3}$ & $(1,1,1,3)$ & $(4,4,4,4)$\\
$P_4$ & $(1,1,2,2)$ & $(2,2,4,4)$\\
paw & $(1,2,2,3)$ & $(3,3,4,4)$\\
$C_4$ & $(2,2,2,2)$ & $(2,2,2,2)$\\
$K_4-e$ & $(2,2,3,3)$ & $(3,3,4,4)$\\
\bottomrule
\end{tabular}
\caption{All possible four-vertex neighborhood graphs except $K_4$.}
\label{tab3.1}
\end{table}

\begin{proof}[Proof of the first case of \cref{thm:main}]
A minimum counterexample is $4$-regular by \cref{lem3.1}.  For every row of \cref{tab3.1} except $C_4$, the sorted lower bounds $b_1\le$ $b_2\le$ $b_3\le$ $b_4$ satisfy $b_i\ge i$. Apply \cref{lem2.1} to the four edges incident with $u$.  Since $u$ was arbitrary, minimality forces
$G[\N(u)]\cong C_4$ for every $u\in\V(G)$.  Choose $u$ and write $\N(u)=\{v_1$, $v_2$, $v_3$, $v_4\}$ so that $v_1v_2v_3v_4v_1$ is an induced $4$-cycle. 
Suppose that the neighborhood of $v_1$ is $\{u,v_2,v_4,q\}$. Here $uv_2,uv_4$ are present, while $v_2v_4,uq$ are absent. To induce a $C_4$, the remaining two edges must be $qv_2,qv_4$. Now
\(
 \N(v_2)=\{u,v_1,v_3,q\}.
\)
Within this neighborhood,  the $C_4$ condition forces $qv_3\in\E(G)$. It follows symmetrically that $q$ is adjacent to all four $v_i$. Thus, $G\cong K_{2,2,2}$. Assign colors as follows:  $uv_1, \ qv_3$ with color $1$, $uv_3,\ qv_1$ with color 2, $uv_2,\ qv_4$ with color 3, $uv_4,\ qv_2$
with color 4, $v_1v_2,\ v_3v_4$ with color 5, $v_1v_4,\ v_3v_2$ with color 6. We can check that the coloring is a D-coloring. 
 This contradiction proves the theorem.
\end{proof}

\section{The proof of $\Delta=5$}\label{sec:five}

In this section, we prove the second case of \cref{thm:main}; in fact, the argument proves that every planar graph of maximum degree at most five is $10$-D-colorable.

\begin{lemma}\label{lem4.1}
Let \(G\) be a planar graph with \(\D(G)\le5\), let
\(v\in V(G)\) satisfy \(d(v)\le3\), and let \(\varphi\) be a
\(10\)-D-coloring of \(H=G-v\). Put \(U=N_G(v)\). For
\(u\in U\), set $r_u=d_{G[U]}(u)$. 
Then \(r_u\in\{0,1,2\}\), and
\[
 |L(vu)|\ge
 \begin{cases}
  6,&r_u=0,\\
  3,&r_u=1,\\
  2,&r_u=2.
 \end{cases}
\]
\end{lemma}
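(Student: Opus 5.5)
Since $|U|=d(v)\le3$, each $u\in U$ has at most two neighbours in $G[U]$, which gives $r_u\in\{0,1,2\}$. To bound $|L(vu)|$ from below, I would count blockers of $vu$, splitting them into incident and nonincident ones. At the moment $vu$ is considered, no other edge at $v$ is coloured. So the incident blockers are the old edges at $u$, and there are at most $d_G(u)-1\le4$ of them. By \cref{lem2.3}, every nonincident blocker $f=ab$ has an endpoint $a\in N_{G[U]}(u)$, i.e.\ a common neighbour of $u$ and $v$. The work is then a case split on $r_u$, bounding the number of old edges at such common neighbours.

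\emph{Case $r_u=0$.} There are no common neighbours, so there are no nonincident blockers, and $|L(vu)|\ge10-4=6$.

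\emph{Case $r_u=1$.} Let $z$ be the unique common neighbour. A nonincident blocker is an edge $zb$ with $b\notin\{u,v\}$. Since $z$ is adjacent to both $u$ and $v$, it has at most $5-2=3$ such edges. This gives at most $4+3=7$ blockers, so $|L(vu)|\ge3$.

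\emph{Case $r_u=2$.} Here $d(v)=3$ and $U=\{u,z_1,z_2\}$ with $uz_1,uz_2\in E(G)$. A nonincident blocker must be disjoint from $\{v,u\}$ and have an endpoint in $\{z_1,z_2\}$. Each $z_i$ is adjacent to $u$ and $v$, so it has at most three other edges, giving at most $6$ such edges in total. If $z_1z_2\in E(G)$, that edge is counted at both ends and is a single blocker, so there are at most $5$. This gives at most $4+6=10$ blockers, which is one too many, and this is the main obstacle.

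To get the required saving, I would first try to show that some counted edge is not actually a blocker, or that two counted edges coincide. Take an edge $z_1b$ with $b\notin\{u,v,z_2\}$. It sees $vu$ only if at least three of $vz_1,vb,uz_1,ub$ are edges. Since $vb\notin E(G)$ (because $N(v)=U$), this forces $ub\in E(G)$, so $b\in N(u)\setminus\{v,z_1,z_2\}$. Now $u$ has at most $5-3=2$ neighbours outside $\{v,z_1,z_2\}$. Hence the nonincident blockers lie among $z_1z_2$ and the edges from $\{z_1,z_2\}$ to at most two vertices $b_1,b_2$, giving at most $1+4=5$ of them. Moreover, the incident count drops when such $b_j$ are scarce: the old edges at $u$ are exactly $uz_1,uz_2$ and the edges $ub_j$, so there are at most $4$ of them. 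Altogether there are at most $4+5=9$ blockers, and $|L(vu)|\ge1$.

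To reach $2$, I would then use \cref{lem2.4}. By that lemma, the vertices $u$ and $b_j$ have at most two common neighbours in $G[N(u)]$, and planarity forbids $z_1,z_2$ both adjacent to both $b_1,b_2$ when combined with $v,u$: that would produce a $K_{3,3}$-type configuration $\{u,\ldots\}$ via \cref{lem2.4} applied at $u$. The plan is to show that at most three of the four edges $z_ib_j$ exist, or else that $z_1z_2\notin E(G)$, either of which yields at most $8$ blockers. I expect this final planarity/outerplanarity bookkeeping to be the delicate step. It should go through because $G[N(u)]$ is outerplanar and contains $z_1,z_2,b_1,b_2$ together with $v$.
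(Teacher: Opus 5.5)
Your proposal is correct and follows the paper's proof essentially line by line. It uses the same split into incident blockers (at most $4$ old edges at $u$) and nonincident blockers, and in the case $r_u=2$ the same reduction of the nonincident blockers to $z_1z_2$ and the edges $z_ib_j$ with $b_j\in N(u)\setminus\{v,z_1,z_2\}$, followed by the same planarity exclusion. The step you flag as delicate is immediate: if all four edges $z_ib_j$ were present, then $\{u,z_1,z_2\}$ and $\{v,b_1,b_2\}$ would span a $K_{3,3}$ (equivalently, $z_1,z_2$ would have three common neighbours $v,b_1,b_2$ in the outerplanar graph $G[N(u)]$), which gives at most $4+(1+3)=8$ blockers; your sentence about common neighbours of $u$ and $b_j$ in $G[N(u)]$ is misstated and not needed, and neither is the alternative branch $z_1z_2\notin E(G)$.
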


\begin{proof}
Fix \(u\in U\), and put
 $X=U\setminus\{u\}$, $Y=N_G(u)\setminus\{v\}$ and
 $Z=X\cap Y.$
Thus \(|Z|=r_u\). There are at most \(d_G(u)-1\le4\) old
blockers of \(vu\) that are incident with \(u\).

Now let \(f\) be a nonincident old blocker of \(vu\). 
Since \(f\) and \(vu\) lie in a common diamond, its other endpoint
lies in \(X\cup Y\). Hence \(f\) either has both endpoints in \(Z\),
or has one endpoint in \(Z\) and the other in
\((X\cup Y)\setminus Z\).

If \(r_u=0\), there is no nonincident old blocker, and therefore
\(|L(vu)|\ge10-4=6\).

If \(r_u=1\), write \(Z=\{z\}\). The vertex \(z\) is already
adjacent to both \(v\) and \(u\), so it is incident with at most
three further edges. Thus \(vu\) has at most three nonincident old
blockers, and \(|L(vu)|\ge10-(4+3)=3\).

Suppose that \(r_u=2\). Since \(d_G(v)\le3\), this forces
\(d_G(v)=3\) and
$X=Z=\{z_1,z_2\}$. 
 Since \(d_G(u)\le5\), we have
\(|Y\setminus Z|\le2\), and every nonincident old blocker of \(vu\) belongs to
$E(G[Z])\cup E_G(Z,Y\setminus Z)$. 
There are at most four such blockers. Indeed, if there were five,
then the sets
$ \{u,z_1,z_2\}$ and $\{v,a_1,a_2\}$
would be the two partite sets of a \(K_{3,3}\), contrary to
planarity. Hence \(|L(vu)|\ge10-(4+4)=2\).
\end{proof}

\begin{lemma}\label{lem4.2}
Let \(G\) be a planar graph with \(\D(G)\le5\), and let
\(v\in V(G)\) satisfy \(d(v)\le3\). Every \(10\)-D-coloring of
\(G-v\) extends to a \(10\)-D-coloring of \(G\).
\end{lemma}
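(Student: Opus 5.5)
Let $U=\N_G(v)$, so $|U|=d(v)\le3$. The uncolored edges are the at most three edges $vu$ with $u\in U$, and they pairwise see each other. The plan is to apply \cref{lem2.1} to them, using the list bounds of \cref{lem4.1} according to the isomorphism type of $G[U]$. If $d(v)\le1$ there is nothing to prove beyond $|L(vu)|\ge6\ge1$. If $d(v)=2$, each $r_u\le1$, so every list has size at least $3\ge2$ and \cref{lem2.1} applies directly. So assume $d(v)=3$, $U=\{u_1,u_2,u_3\}$.

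Next we run through the four possibilities for $G[U]$ on three vertices.

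\textbf{Empty graph.} All $r_u=0$ and the lists have size at least $6$.

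\textbf{One edge.} The degree sequence is $(0,1,1)$, giving sorted bounds $(3,3,6)$.

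\textbf{Path $P_3$.} The degree sequence is $(1,1,2)$, giving sorted bounds $(2,3,3)$. These satisfy $b_i\ge i$, so \cref{lem2.1} finishes this case.

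\textbf{Triangle.} All three $r_u=2$, so \cref{lem4.1} only gives $(2,2,2)$. This is the main obstacle, since the greedy ordering fails for the third edge.

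For the triangle case I would sharpen the count from \cref{lem4.1}. Here $G[\N[v]]=K_4$, and every old edge seeing $vu_i$ has an endpoint in $U$ by \cref{lem2.3}. The old edges are the three edges $u_1u_2,u_2u_3,u_1u_3$ together with the edges from $U$ to outside vertices, and there are at most $3\cdot 2=6$ of the latter since each $u_i$ has degree at most $5$.

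The key point is that the same set of colored edges blocks all three edges $vu_i$. Each $u_iu_j$ sees every $vu_k$, either by incidence or through the diamond inside the $K_4$. An outside edge $u_ia$ sees $vu_i$ by incidence, and it sees $vu_j$ exactly when $a$ is adjacent to $v$ or $u_j$, which forces $a\in\N(u_j)$.

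A union bound over blockers then gives
\[
|L(vu_1)\cup L(vu_2)\cup L(vu_3)|\ \ge\ 10-|\{\text{colors on the at most }9\text{ old edges at }U\}|\ \ge\ 1,
\]
which alone is too weak. So I would instead argue with Hall's theorem: a system of distinct representatives fails only if all three lists equal the same $2$-set. In that case, for each $i$, exactly $8$ distinct colors block $vu_i$. These include the three triangle colors and the four colors at $u_i$'s outside edges. Hence each $u_i$ has degree exactly $5$ with two outside neighbours $a,b$, and the two remaining outside edges at each $u_i$ are blocked for the other $vu_j$ as well, with the same color set.

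Then I would use planarity as in \cref{lem4.1}: $u_i$ and $u_j$ sharing outside neighbours repeatedly yields a $K_{3,3}$ or a $K_5$-minor together with $v$ and the triangle. If some outside edge at $u_j$ does not see $vu_i$, its color must coincide with one seen by $vu_i$ for the lists to agree. Equal lists mean the union of blocked colors has size $8$. Counting the $3+6$ old edges, and noting that the $6$ outside edges incident to distinct $u_i$ may share colors, I expect the forced coincidences plus planarity (at most two common neighbours of a triangle, by \cref{lem2.4}) to give a contradiction.

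If this accounting is insufficient, the fallback is to recolor one outside edge at some $u_i$: it has few constraints because its far endpoint has degree at most $5$. Recoloring it frees a color in exactly one list, which breaks the equality and lets Hall's condition hold.
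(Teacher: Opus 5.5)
Your treatment of $d(v)\le2$ and of the three non-triangle cases for $d(v)=3$ is correct and matches the paper. Your Hall reduction in the triangle case is also sound: when all three lists have size at least $2$, an extension fails only if $L(vu_1)=L(vu_2)=L(vu_3)$ is one $2$-set $S$.

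\textbf{The gap.} The triangle case is the real content of the lemma, and there you stop at ``I expect the forced coincidences plus planarity to give a contradiction''. No contradiction is actually derived. The fallback does not close the gap either. You never show that an outside edge $u_ia$ has a color available outside $S$ and its current color. It is seen by the other edges at $u_i$, by up to four edges at $a$, and by nonincident edges through common neighbours of $u_i$ and $a$, and none of these is counted. Some intermediate statements are also wrong. Each $u_i$ has at most two outside edges, not four. The blocker sets of the three spokes are \emph{not} the same, and that difference is exactly what produces the contradiction.

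\textbf{What is missing} is the equality analysis of the blocker count.
\begin{itemize}
\item If $L(vu_i)=S$ with $|S|=2$, then $vu_i$ has at least $8$ blockers. By \cref{lem2.3} with $W_{u_i}=U$, all of them are old edges meeting $U$.
\item Write $O_i=N(u_i)\setminus N[v]$, so $|O_i|\le2$, and put $d_U(w)=|N(w)\cap U|$ for vertices $w$ outside $N[v]$.
\item An outside edge $u_jw$ with $j\ne i$ sees $vu_i$ if and only if $w\in O_i$. Hence $vu_i$ has exactly $3+\sum_{w\in O_i}d_U(w)$ blockers.
\item So $\sum_{w\in O_i}d_U(w)\ge5$ for every $i$, and summing over $i$ gives $\sum_w d_U(w)^2\ge15$.
\item On the other hand, $\sum_w d_U(w)=\sum_i|O_i|\le6$ and $d_U(w)\le3$. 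At most one $w$ has $d_U(w)=3$, since two such vertices together with $v$ would form a $K_{3,3}$ against $U$.
\item Therefore $\sum_w d_U(w)^2\le9+4+1=14$, a contradiction.
\end{itemize}

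The paper makes the same argument in structural form. Assuming $|L_1|=|L_2|=|L_3|=2$, each $u_i$ has degree $5$ and exactly four nonincident blockers. This forces a vertex $p$ adjacent to all of $U$, unique by $K_{3,3}$. The second outside neighbour $q_i$ of each $u_i$ must then be adjacent to exactly one other $u_j$, which is an impossible pairing on three indices. Hence some list has size at least $3$, and \cref{lem2.1} applies.
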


\begin{proof}
Let \(\varphi\) be a \(10\)-D-coloring of \(G-v\), put
\(U=N_G(v)\).

If \(d_G(v)\le2\), then \(d_{G[U]}(u)\le1\) for every \(u\in U\).
By \cref{lem4.1}, each new-edge list has size at least
three, 
\cref{lem2.1} extends \(\varphi\) to them.

Assume henceforth that \(d_G(v)=3\), and write
 $U=\{u_1,u_2,u_3\}$,  $L_i=L(vu_i)$, $r_i=d_{G[U]}(u_i)$. 
 
Suppose first that \(G[U]\ne K_3\). Two vertices of \(U\), say
\(u_1\) and \(u_2\), are nonadjacent, so \(r_1,r_2\le1\). By
\cref{lem4.1}, $|L_1|,|L_2|\ge3$ and $ |L_3|\ge2$.
Thus \cref{lem2.1} applies.

It remains to consider \(G[U]\cong K_3\). Here \(r_1=r_2=r_3=2\),
so \cref{lem4.1} gives \(|L_i|\ge2\) for every
\(i\in[3]\). By \cref{lem2.1}, we assume that
$ |L_1|=|L_2|=|L_3|=2$. 
Fix \(i\in[3]\), and let \(\{j,k\}=[3]\setminus\{i\}\). Since \(d_G(u_i)=5\),  \(vu_i\) has exactly four nonincident old
blockers.

The vertex \(u_i\) is already adjacent to \(v,u_j,u_k\), so it has
exactly two neighbors outside \(N_G[v]\); call them \(p_i\) and
\(q_i\). The possible nonincident blockers of \(vu_i\) are the edge
\(u_ju_k\) and the three possible edges between
\(\{u_j,u_k\}\) and \(\{p_i,q_i\}\). After interchanging \(p_i\) and
\(q_i\), if necessary, we may assume that
$p_i\text{ is adjacent to }u_j,u_k$,
 and  $q_i\text{ is adjacent to exactly one of }u_j,u_k$. Thus \(p_i\) is adjacent to all three vertices of \(U\).

If two of $p_1, p_2, p_3$ were distinct, those two vertices together with $v$ would form one part of a $K_{3,3}$, that's impossible. Hence $p_1=p_2=p_3=: p$. Consequently, whenever $\{i, j, k\}=[3]$, $N_G\left(u_i\right)=\left\{v, u_j, u_k, p, q_i\right\}$. Since $d_G\left(q_i\right)=5$, we cannot get that $q_i$ is adjacent to exactly one of $u_j, u_k$ for each $i \in[3]$. Therefore at least one of $L_1, L_2, L_3$ has size at least three,  \cref{lem2.1}  completes the extension. \end{proof}

\begin{corollary}\label{cor4.3}
A minimum counterexample to the statement that every planar graph of maximum degree at most five is $10$-D-colorable has minimum degree at least four.
\end{corollary}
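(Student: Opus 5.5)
The corollary should follow from \cref{lem4.2} by a minimality argument. Let $G$ be a counterexample to the statement that every planar graph with $\D(G)\le5$ is $10$-D-colorable, and choose $G$ with the minimum number of vertices (or of vertices plus edges). Suppose, for a contradiction, that $G$ has a vertex $v$ with $d_G(v)\le3$.

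First I consider the graph $G-v$. It is planar and has maximum degree at most five, and it has fewer vertices than $G$. By minimality it therefore admits a $10$-D-coloring $\col$. As noted in \cref{sec:prelim}, $G-v$ is an induced subgraph of $G$, so two edges of $G-v$ see each other in $G-v$ exactly when they see each other in $G$. Hence $\col$ is a partial D-coloring of $G$ in which only the edges at $v$ are uncolored. This is precisely the situation of \cref{lem4.2}.

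Next I apply \cref{lem4.2}, which extends $\col$ to a $10$-D-coloring of $G$. This contradicts the choice of $G$, so $\delta(G)\ge4$.

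The case $d_G(v)=0$ is trivial, since then $G$ has no edges at $v$ to color, and \cref{lem4.2} covers it anyway because $d(v)\le3$. If one wants a connected counterexample, one also observes that a minimum counterexample may be taken connected, since the components can be colored independently. I expect no real obstacle here: all the work sits in \cref{lem4.1} and \cref{lem4.2}. The only point needing care is the remark that deleting $v$ preserves the seeing relation, which the preliminaries already record.
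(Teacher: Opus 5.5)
Your proposal is correct and is exactly the argument the paper intends: the corollary is stated without a separate proof, as the immediate consequence of \cref{lem4.2} obtained by deleting a vertex of degree at most three, coloring $G-v$ by minimality, and extending. Your remark that $G-v$ is induced, so that its D-colorings are partial D-colorings of $G$, is the same observation from \cref{sec:prelim} that the paper relies on when invoking the lemma.
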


We now work with a fixed plane embedding. A triangle $T$ is a Jordan curve and bounds two closed disks. A pair $(T,D)$ is called a \emph{positive triangular disk} if $D$ is one of these closed disks and $\Int(D)$ contains a vertex of $G$. It is \emph{minimum} if the number of vertices in $\Int(D)$ is minimum among all positive triangular disks.

\begin{lemma}\label{lem4.4}
Let $(T,D)$ be a minimum positive triangular disk. Every edge of $T$ has at most one common neighbor in $\Int(D)$.
\end{lemma}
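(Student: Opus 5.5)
Let $T=abc$ and suppose, for a contradiction, that the edge $ab$ has two distinct common neighbors $z_1,z_2\in\Int(D)$. The plan is to find a positive triangular disk strictly inside $D$ whose interior contains fewer vertices than $\Int(D)$. That contradicts the minimality of $(T,D)$.

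The triangles $abz_1$ and $abz_2$ are both Jordan curves contained in $D$. Each bounds a closed disk $D_i\subseteq D$ that does not contain $c$ in its interior. Since the embedding is plane, the edges $az_1,bz_1,az_2,bz_2$ lie in $D$. The two paths $az_1b$ and $az_2b$ together with the edge $ab$ split $D$ into regions. After relabelling, $z_2$ lies in the closed disk $D_1$ bounded by $abz_1$ (on the side away from $c$), or the other way round. To justify this, note that the three internally disjoint $a$–$b$ routes $ab$, $az_1b$, $az_2b$ inside the disk $D$ (whose boundary also contains the route $acb$) appear in some cyclic order around $a$. Among $z_1,z_2$, the one whose path is farther from $c$ in that order, say $z_2$, is enclosed by the triangle formed by $ab$ and the nearer path. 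So $z_2\in\Int(D_1)$.

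With $z_2\in\Int(D_1)$, the pair $(abz_1,D_1)$ is a positive triangular disk. Its interior is contained in $\Int(D)\setminus\{z_1\}$, because $z_1$ lies on its boundary. Hence $\Int(D_1)$ has strictly fewer vertices than $\Int(D)$, contradicting the choice of $(T,D)$. The same argument applies to the edges $bc$ and $ca$.

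The main obstacle is the topological step: showing that one of $z_1,z_2$ lies strictly inside the disk cut off by the other triangle on the side away from $c$. It is also necessary to check that this disk, together with its boundary, lies in $D$, so that its interior vertices form a subset of $\Int(D)$. I would handle this with the Jordan curve theorem. The path $az_1b$ is a chord of the disk $D$, since its interior point $z_1$ lies in $\Int(D)$. It therefore splits $D$ into two closed disks, one containing $c$ and one bounded by $abz_1$, which is $D_1$. Because of the planar embedding, $z_2\ne z_1$ lies in the interior of exactly one of these two pieces. If $z_2$ lies in $\Int(D_1)$ we are done. Otherwise $z_2$ is on the $c$-side, and the chord $az_2b$ leaves $z_1$ on the far side of $abz_2$, so $z_1\in\Int(D_2)$ and the same argument applies to $(abz_2,D_2)$.
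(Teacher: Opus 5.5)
Your proof is correct and follows the same route as the paper. The paths $az_1b$ and $az_2b$ are non-crossing chords of $D$ on the same side of $ab$, so one triangle $abz_i$ bounds a subdisk of $D$ whose interior contains the other common neighbor. That subdisk has strictly fewer interior vertices, because $z_i$ now lies on its boundary. Your chord-splitting argument via the Jordan curve theorem just spells out the topological step that the paper states in a single sentence.
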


\begin{proof}
Suppose that an edge $ab$ of $T$ has two common neighbors $x,y$ in $\Int(D)$. The two arcs $axb$ and $ayb$ lie on the same side of $ab$ and do not cross. Hence one of the triangles $abx,aby$ bounds a closed subdisk of $D$ whose interior contains the other common neighbor. This is a positive triangular disk with fewer interior vertices than $D$, a contradiction.
\end{proof}


The next lemma isolates the two geometric settings in which the fan extension will be used.

\begin{lemma}\label{lem4.5}
Let $v$ be a vertex of degree $d\in\{4,5\}$ in a plane graph $G$, and list its neighbors in cyclic order as $x_0,\ldots,x_{d-1}$. Assume either
\begin{enumerate}[label=\textup{(\roman*)}]
\item every triangle of $G$ is facial; or
\item $(T,D)$ is a minimum positive triangular disk, $v\in\Int(D)$, and $v$ has at most one neighbor on $T$.
\end{enumerate}
Then $J=G[N(v)]\subseteq C_d=x_0x_1\cdots x_{d-1}x_0$.

\end{lemma}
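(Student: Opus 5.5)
We have to show that every edge of $J=G[N(v)]$ joins two neighbors of $v$ that are consecutive in the rotation at $v$. Let $x_ix_j$ be an edge of $J$ with $x_i,x_j$ not consecutive, and look at the triangle $T'=vx_ix_j$. In the embedding, the edges $vx_i$ and $vx_j$ split the small neighborhood of $v$ into two angular sectors. Since $x_i$ and $x_j$ are not consecutive, each sector contains at least one further edge $vx_k$; for instance $x_{i+1}$ lies in one sector and $x_{j+1}$ in the other. Since $T'$ is a Jordan curve through $v$, these two edges leave $v$ on opposite sides of $T'$. Hence $x_{i+1}$ lies in the interior of one disk bounded by $T'$ and $x_{j+1}$ lies in the interior of the other. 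So $T'$ is a separating triangle, and each of its closed disks contains a vertex of $G$ in its interior.

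In case (i) this already finishes the proof. A facial triangle bounds a face, so one of its two disks has no vertex in its interior, and a separating triangle therefore cannot be facial.

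In case (ii) we use minimality. Let $D'$ be the disk of $T'$ that is contained in $D$. Such a disk exists because $v\in\Int(D)$. Also, $x_i,x_j\in D$, since $v$ has at most one neighbor on $T$ and the edges from $v$ to vertices outside $D$ would have to cross $T$; so $T'$ lies in $D$. The main obstacle is to check that $D'\subsetneq D$ really loses at least one interior vertex, so that $(T',D')$ contradicts minimality. I plan to do this as follows.

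\begin{itemize}
\item At most one of $x_i,x_j$ lies on $T$. The other, say $x_j$, lies in $\Int(D)$ but not in $\Int(D')$, because it is on $T'$.
\item Every interior vertex of $D'$ is an interior vertex of $D$, since $D'\subseteq D$ and no vertex of $T$ can lie in $\Int(D')$, as $T\cap\Int(D)=\emptyset$.
\item Hence $|\Int(D')\cap V|\le|\Int(D)\cap V|-1$.
\item $D'$ still contains a vertex in its interior: it contains one of $x_{i+1},x_{j+1}$ from the two-sector argument. Then $(T',D')$ is a positive triangular disk with fewer interior vertices, a contradiction.
\end{itemize}

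One subtlety is that $D'$ might not be contained in $D$ if the "inside" disk of $T'$ is ambiguous. I resolve this by taking $D'$ to be the disk of $T'$ not containing $T\setminus T'$. This choice is valid because at most one vertex of $T$ lies on $T'$.

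Finally, all edges of $J$ then join consecutive vertices, so $J\subseteq C_d$.
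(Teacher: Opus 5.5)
Your proof is correct and is essentially the paper's argument in contrapositive form. The paper notes that the disk of the triangle $vx_ix_j$ lying inside $D$ has fewer interior vertices, so by minimality it contains no vertex, the triangle is facial on that side, and hence $x_i,x_j$ are consecutive; you instead show that non-consecutive $x_i,x_j$ would place a vertex on each side of the triangle, and then apply the same minimality to the subdisk $D'\subseteq D$. One small point: the claim $T'\subseteq D$ relies on the edge $x_ix_j$ having an endpoint in $\Int(D)$, which you only establish later in your bullet list.
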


\begin{proof}
If $x_ix_j\in E(G)$, then $vx_ix_j$ is a triangle. In case (i) it is facial. In case (ii), it has at most one vertex on $T$, and one of the two disks that it bounds is a proper subdisk of $D$. If that subdisk contained a vertex, it would contradict the minimality of $(T,D)$; hence the triangle is facial on that side. Therefore $x_i$ and $x_j$ are consecutive around $v$, proving $J\subseteq C_d$.
\end{proof}

Delete $v$, take a $10$-D-coloring of $G-v$, and then  uncolor every edge of $J$. Put
$ s_i=vx_i$,  $r_i=d_J(x_i)$.
We call each edge $s_i=vx_i$ a \emph{spoke}, and each edge of $J$ a
\emph{rim edge}. Since $J\subseteq C_d$, every rim edge is of the form
$x_ix_{i+1}$, with indices taken modulo $d$. The uncolored local patch
consists precisely of these spokes and rim edges. 

\begin{lemma}\label{lem4.6}
Under the hypotheses of \cref{lem4.5}, every spoke satisfies
\[
 |L(s_i)|\ge6.
\tag{4.1}\label{eq:spoke-list}
\]
For every rim edge $x_ix_j\in E(J)$,
\[
 |L(x_ix_j)|\ge r_i+r_j,
\tag{4.2}\label{eq:rim-list}
\]
except possibly in case \textup{(ii)} when $x_i$ is the unique neighbor of $v$ on $T$, $r_i=1$, and the other common neighbor $q$ of $x_i,x_j$ satisfies $qx_i\in E(T)$. In this exceptional situation,
\[
 |L(x_ix_j)|\ge r_i+r_j-1=r_j.
\tag{4.3}\label{eq:rim-exception}
\]
At most one rim edge is exceptional.
\end{lemma}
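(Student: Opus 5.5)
We count blockers directly. The setting is as follows. We have $\D(G)\le5$ and $\delta(G)\ge4$ by \cref{cor4.3}. The vertex $v$ has degree $d\in\{4,5\}$, and $J\subseteq C_d$ by \cref{lem4.5}. The patch consists of the $d$ spokes and the $|E(J)|$ rim edges, all uncolored. Every other edge of $G$ is an old edge, colored from $[10]$.

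\textbf{Spokes.} Fix $s_i=vx_i$. By \cref{lem2.3} every old blocker of $s_i$ has an endpoint in $W=\{x_i\}\cup N_J(x_i)$.
\begin{itemize}
\item The old edges at $x_i$ number $d(x_i)-1-r_i\le 4-r_i$, because the spoke and the $r_i$ rim edges at $x_i$ are uncolored.
\item Each neighbor $x_j\in N_J(x_i)$ contributes only nonincident blockers $x_jy$. Here $y\ne v$, and $x_jy$ is neither the rim edge $x_ix_j$ nor any other rim edge at $x_j$. Diamond-membership also forces $y$ to be adjacent to $x_i$ or to $v$. Since every neighbor of $v$ adjacent to $x_j$ is joined to $x_j$ by a rim edge, $y$ must be a neighbor of $x_i$ outside $N[v]$.
\end{itemize}
Let $k=4-r_i$ be the number of such outside neighbors of $x_i$. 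Then each $x_j$ contributes at most $\min\{k,\,d(x_j)-1-r_j\}$ blockers. The total is at most $(4-r_i)+\sum_{x_j\in N_J(x_i)}\min\{4-r_i,\,3\}$, and we must show it is at most $4$.

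Checking the cases: $r_i=0$ gives $4$ blockers. For $r_i\ge1$ the count is too big, so planarity is needed. Two neighbors $x_j$ with common outside neighbors of $x_i$, together with $x_i$ and $v$, tend to form a $K_{3,3}$ or a $K_5$-minor, as in \cref{lem4.1}. The cyclic embedding gives more: $x_jy$ and $x_iy$, together with the rim and spokes, bound a triangle $x_ix_jy$ adjacent to the face structure around $v$. In case (i) that triangle is facial. In case (ii) minimality of $(T,D)$ also forces facial triangles. So each rim neighbor $x_j$ shares at most one such $y$ with $x_i$, namely the vertex of the face on the far side of the rim edge $x_ix_j$, and \cref{lem2.4} gives at most two common neighbors anyway. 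This yields at most $(4-r_i)+r_i=4$ blockers, hence $|L(s_i)|\ge 10-4=6$.

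\textbf{Rim edges.} For $x_ix_j$, every blocker has an endpoint in $W=\{x_i,x_j\}\cup(N(x_i)\cap N(x_j))$. The common neighbors are $v$ and at most one other vertex $q$, by facial-triangle reasoning and \cref{lem2.4}.
\begin{itemize}
\item The incident old blockers number at most $(4-r_i)+(4-r_j)$, since the spoke and rim edges at each end are uncolored.
\item Edges at $v$ are all uncolored, so the only nonincident old blockers are edges $qy$ with $y\in N(x_i)\cup N(x_j)$, $y\notin\{x_i,x_j\}$. Such an edge lies in a diamond with $x_ix_j$ through the triangle $x_ix_jq$. By the facial structure there are at most two of them: one on each side, toward the other neighbors of $x_i$ and of $x_j$.
\end{itemize}
This gives $|L(x_ix_j)|\ge 10-(8-r_i-r_j)-2=r_i+r_j$.

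\textbf{Exception.} The only way this count can fail is when a triangle at $x_i$ is not forced to be facial. In case (ii) this happens only when $x_i$ lies on $T$ and $qx_i\in E(T)$. Then the triangle $x_iqy$ may be an edge of $T$, not interior, and one extra blocker may appear, giving \eqref{eq:rim-exception}. Since $v$ has at most one neighbor on $T$, only rim edges at that neighbor can be exceptional. Of those, only one can have its $q$ joined to $x_i$ by an edge of $T$ when $r_i=1$. Hence at most one rim edge is exceptional.

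The main obstacle is the precise planarity bookkeeping for nonincident blockers of spokes, namely showing each rim neighbor contributes exactly as many blockers as the incident ones it saves. I would first try to organize everything via the faces around $v$: each face $vx_ix_{i+1}$ whose rim edge is present, and the face beyond it.
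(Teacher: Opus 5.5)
Your counting scheme is the paper's. For a spoke you count $(4-r_i)$ old edges at $x_i$ plus at most one edge $x_jy$ per rim neighbour. For a rim edge you count $(4-r_i)+(4-r_j)$ incident blockers plus the edges $qw$ through the two sides of the facial triangle $qx_ix_j$. The spoke bound and the non-exceptional rim case are fine. One aside is wrong, though: \cref{lem2.4} does not cap the common neighbours of the edge $x_ix_j$ at two. It only controls common neighbours inside some $G[N(x)]$, and an edge can have up to $\D-1$ common neighbours, as in $F_\D$. The facial-triangle argument has to carry that step on its own.

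The genuine gap is the exceptional case $qx_i\in E(T)$. You say that ``one extra blocker may appear'' and conclude (4.3), but you give no reason why only one. Outside $D$, minimality gives no facial control along the boundary edge $qx_i$. The bound must therefore come from $d(x_i)\le5$: $x_i$ is already adjacent to $v$, $q$, the third vertex $t$ of $T$, and its $r_i$ rim neighbours. More seriously, the lemma claims the \emph{full} bound (4.2) when $qx_i\in E(T)$ and $r_i=2$, whereas your reasoning yields only $r_i+r_j-1$ there.

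The missing step is as follows. If $r_i=2$ with rim neighbours $x_j,x_k$, then $N(x_i)=\{v,q,t,x_j,x_k\}$ and $qv\notin E(G)$. Also $qx_k\notin E(G)$ by \cref{lem4.4}, because $x_j$ is already an interior common neighbour of the boundary edge $qx_i$. Hence $qt$ is the only blocker through $qx_i$, at most one more arises through $qx_j$, and (4.2) holds.

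Without this step, your final claim that at most one rim edge is exceptional is unsupported. For $r_i=2$ the rotation at $x_i$ inside $D$ is $q,x_j,v,x_k,t$ up to relabelling. So the far common neighbours of $x_ix_j$ and $x_ix_k$ can be $q$ and $t$, and both rim edges meet your geometric exceptional condition. Your argument would then permit two deficient rim edges at a $2$-vertex of $J$, which the hypothesis of \cref{lem4.7} does not allow.

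For $r_i=1$ the same degree count gives $qt$, at most one further $qw$, and at most one blocker through $qx_j$. That is $4-r_i=3$ nonincident blockers, which is exactly (4.3).
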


\begin{proof}
For each $s_i$, the still-colored old edges incident with $x_i$ number at most $d_G(x_i)-1-r_i\le4-r_i$.
A nonincident colored blocker must be incident with some $x_j\in N_J(x_i)$. Write it as $x_jy$. Since $x_jy$ is still colored, $y\notin N(v)$; in order that $x_jy$ see $vx_i$, the edge $x_iy$ must be present. Thus $y$ is a common neighbor of $x_i,x_j$ other than $v$. The triangle $vx_ix_j$ is facial, and $x_ix_j$ is not an edge of $T$ in case (ii), because $v$ has at most one neighbor on $T$. On the side opposite $v$, there is at most one such common neighbor: two would create a smaller positive triangular disk in case (ii), while in case (i) they would make one of the corresponding triangles nonfacial. Hence each $x_j\in N_J(x_i)$ contributes at most one colored blocker. The total is at most $(4-r_i)+r_i=4$, proving \eqref{eq:spoke-list}.

Now let $e=x_ix_j$ be a rim edge. Still-colored old edges incident with $x_i$ or $x_j$ number at most
\[
 (4-r_i)+(4-r_j).
\tag{4.4}\label{eq:incident-rim}
\]
Besides $v$, the edge $x_ix_j$ has at most one common neighbor $q$ on the opposite side of the facial triangle $vx_ix_j$. A nonincident colored blocker must have the form $qw$, with $w$ adjacent to $x_i$ or $x_j$.

First suppose that neither $qx_i$ nor $qx_j$ is an edge of $T$. Along each of the edges $qx_i$ and $qx_j$, the triangle $qx_ix_j$ occupies one side, and on the other side there is at most one further common neighbor, by the same  argument as above. Thus there are at most two nonincident colored blockers. Combining this with \eqref{eq:incident-rim} gives \eqref{eq:rim-list}.

It remains to consider $qx_i\in E(T)$; the other case is symmetric.
Then $x_i$ is the unique neighbor of $v$ on $T$. Let $t$ be the third
vertex of $T$. The edge $qt$ is one possible blocker. If $r_i=2$ and
$x_k$ is the other neighbor of $x_i$ in $J$, then $qx_k\notin E(G)$:
indeed, $x_j$ is already an interior common neighbor of the boundary
edge $qx_i$, so a second such vertex $x_k$ would contradict
\cref{lem4.4}. The vertex $x_i$ is already adjacent to $v$, to its two
neighbors $q,t$ on $T$, and to its $r_i$ neighbors in $J$, so it has at
most $2-r_i$ further neighbors. These account for at most $2-r_i$
additional blockers through the boundary edge $qx_i$. Along $qx_j$,
which is not an edge of $T$, there is at most one further blocker.
Hence the number of nonincident blockers is at most
$1+(2-r_i)+1=4-r_i$.

If $r_i=2$, this is at most two and \eqref{eq:rim-list} still holds. If $r_i=1$, then \eqref{eq:incident-rim} together with the last bound gives at most
$ (4-1)+(4-r_j)+3=10-r_j$
blockers, proving \eqref{eq:rim-exception}. Finally, when $r_i=1$, only one rim edge is incident with $x_i$, so at most one rim edge can be exceptional.
\end{proof}

For $J\subseteq C_d$, define the \emph{patch conflict graph} $Q(J)$ as follows. Its vertices are the spokes and rim edges of $v\vee J$, and two vertices of $Q(J)$ are adjacent when the corresponding patch edges see each other. This graph is determined entirely by $v\vee J$, since two nonincident edges can lie in a common diamond only through cross-edges among their four endpoints.

\begin{lemma}\label{lem4.7}
Let $d\in\{4,5\}$ and $J\subseteq C_d$. Suppose that every spoke has a list of size at least six, every rim edge $x_ix_j$ has a list of size at least $d_J(x_i)+d_J(x_j)$, and at most one rim edge incident with a $1$-vertex of $J$ has a list smaller by one. Then $Q(J)$ is colorable from these lists.
\end{lemma}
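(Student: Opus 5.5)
The plan is to reduce \cref{lem4.7} to a finite family of list-coloring problems and certify each one either by a greedy ordering (\cref{lem2.1} generalized to degeneracy) or by a nonvanishing graph-polynomial coefficient via \cref{lem:alon}.

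\textbf{Step 1: the finite family.} Since $J\subseteq C_d$ with $d\in\{4,5\}$, the subgraph $J$ is determined up to rotation and reflection by which rim edges of $C_d$ are present. This gives only a handful of isomorphism types:
\begin{itemize}
\item for $d=4$: empty, $K_2$, $2K_2$, $P_3$, $P_4$, $C_4$;
\item for $d=5$: empty, $K_2$, $2K_2$, $P_3$, $P_3\cup K_2$, $P_4$, $P_5$, $C_5$.
\end{itemize}
For each type we write down $Q(J)$ explicitly. The spokes $s_i,s_j$ always see each other, since they are incident at $v$. A rim edge $x_ix_{i+1}$ sees every spoke: it is incident with $s_i$ and $s_{i+1}$, and for any other spoke $s_k$ the cross-edges $vx_i,vx_{i+1}$ are present, so a diamond exists iff $x_k$ is adjacent to $x_i$ or $x_{i+1}$. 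Here one must check carefully which nonincident pairs actually see each other. Rim edges see each other when incident, or when consecutive with one step between them, through the diamond on $v$ and a $2$-path of $J$.

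\textbf{Step 2: reduce to the worst lists.} The list sizes are $6$ for spokes and $d_J(x_i)+d_J(x_j)\in\{2,3,4\}$ for rim edges, with one possible deficiency by one at an end of a path component. Rim edges with small lists have few neighbors among the rim edges, so we first try a degeneracy ordering. Color the spokes last: each spoke has at most $d-1\le4$ other spokes and at most $|E(J)|\le5$ rim neighbors. This is not good enough in general, since $6<9$ when $J=C_5$.

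So the scheme is the reverse. Greedily color the isolated rim parts, then handle spokes, then rim edges whose list size exceeds their degree in $Q(J)$ restricted to the remaining uncolored edges. Whenever a vertex of $Q(J)$ has list size exceeding its degree in the current remaining graph, delete it and color it last. After iterated removal, a small core remains.

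\textbf{Step 3: certify the core.} For the residual cores, where degeneracy fails (notably $J=C_4$, $C_5$, $P_4$, $P_5$ with lists $6$ on spokes and $4$ or $3$ on rims), apply \cref{lem:alon}. Order the vertices of $Q(J)$ and pick exponents $t_z=|L(z)|-1$, reduced where necessary so that $\sum t_z=|E(Q(J))|$. Then exhibit the coefficient of $\prod z^{t_z}$ in $P_{Q(J)}$ as nonzero, computed by machine as in Appendix~\ref{app:code}. Since at most one rim edge may be deficient, and only at a $1$-vertex of $J$ (so only for the path types), we must also check the deficient variants, placing the deficiency at each end of each path component up to symmetry.

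\textbf{Main obstacle.} The obstacle is the $C_5$ and $P_5$ cases with $d=5$. There $Q(J)$ is dense: ten vertices, with spokes of degree close to $9$ and lists only $6$. Degeneracy fails, and a polynomial certificate with the right exponent vector must be found and verified, choosing exponents so that the degree sum matches exactly.
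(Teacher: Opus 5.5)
Your overall route is the paper's: enumerate the finitely many $J\subseteq C_d$, handle most of them by a degeneracy order, and certify the rest, including the deficient variants, with \cref{lem:alon}. But Step~1 describes $Q(J)$ incorrectly, and taken literally this makes Step~3 fail.

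Two nonincident rim edges never see each other. A diamond containing both has exactly their four endpoints as vertices, so $v$ plays no role. At least three of the four cross-edges would then have to lie in $J\subseteq C_d$, but at most two do when $d=4$ and at most one when $d=5$. Likewise, a rim edge $x_ix_{i+1}$ does not see every spoke. Your own ``iff'' is the correct rule; in $C_5$, for instance, the rim edge misses $s_{i+3}$.

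This is not cosmetic. With your extra rim--rim conflicts, $Q(C_4)$ becomes $K_8$ with list sizes $6,6,6,6,4,4,4,4$, and the five rims of $C_5$ form a $K_5$ with $4$-element lists. Neither is colorable: take all lists inside $\{1,\dots,6\}$ in the first case, and all rim lists equal in the second. So every admissible coefficient would vanish, and no certificate could exist. With the correct relation, $Q(C_4)$ is $K_8$ minus the two pairs of opposite rims ($26$ edges) and $Q(C_5)$ has $35$ edges. With these counts, the exponent sums match the total degree exactly.

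Second, the substance of the lemma is the finite list of certificates, and your proposal only promises them. The paper gives a deletion order for every linear forest except $P_5$. In particular, $P_4$ and $P_4\cup K_1$ are degenerate, not cores, and the two inequivalent deficient placements in $P_3\cup P_2$ are treated separately. For the remaining cases it gives explicit monomials with nonzero coefficients:
\begin{itemize}
\item $-2$ for $C_4$;
\item $1$ for $P_5$;
\item $-1$ for $P_5$ with $r_{01}$ deficient;
\item $3$ for $C_5$.
\end{itemize}
In each, the exponents are below the list sizes and the total degree is $|E(Q(J))|$.
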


\begin{proof}
For all linear forests $J$ other than $P_5$, the conflict graph is list-degenerate with the stated lower bounds. The complete verification is displayed in \cref{tab:greedy}. In the table, an entry $z^{a/b}$ means that $z$ is deleted when its current degree is $a$ and its guaranteed list size is $b$; the first rim edge is taken to be exceptional whenever all exceptional choices are equivalent by symmetry.  The two inequivalent choices for $P_3\cup P_2$ are listed separately. Coloring in reverse deletion order is therefore valid.

\begin{table}[htbp]
\centering
\scriptsize
\caption{Greedy deletion orders for the linear-forest patches.}\label{tab:greedy}
\begin{tabularx}{\textwidth}{@{}c l X@{}}
\toprule
$d$ & $J$ & deletion order $z^{\text{current degree}/\text{list size}}$\\
\midrule
4 & $\varnothing$ & $s_0^{3/6},s_1^{2/6},s_2^{1/6},s_3^{0/6}$\\
4 & $P_2$ & $s_2^{3/6},s_3^{2/6},s_0^{2/6},s_1^{1/6},r_{01}^{0/1}$\\
4 & $2P_2$ & $s_0^{4/6},s_1^{3/6},r_{01}^{0/1},s_2^{2/6},r_{23}^{1/2},s_3^{0/6}$\\
4 & $P_3$ & $s_3^{3/6},s_0^{4/6},s_1^{3/6},r_{12}^{2/3},r_{01}^{1/2},s_2^{0/6}$\\
4 & $P_4$ & $s_0^{5/6},s_3^{4/6},s_1^{4/6},r_{23}^{2/3},r_{12}^{2/4},r_{01}^{1/2},s_2^{0/6}$\\
\addlinespace
5 & $\varnothing$ & $s_0^{4/6},s_1^{3/6},s_2^{2/6},s_3^{1/6},s_4^{0/6}$\\
5 & $P_2$ & $s_2^{4/6},s_3^{3/6},s_4^{2/6},s_0^{2/6},s_1^{1/6},r_{01}^{0/1}$\\
5 & $2P_2$ & $s_4^{4/6},s_0^{4/6},s_1^{3/6},r_{01}^{0/1},s_2^{2/6},r_{23}^{1/2},s_3^{0/6}$\\
5 & $P_3$ & $s_3^{4/6},s_4^{3/6},s_0^{4/6},s_1^{3/6},r_{12}^{2/3},r_{01}^{1/2},s_2^{0/6}$\\
5 & $P_3\cup P_2$ (an end-edge of $P_3$ exceptional) & $s_3^{5/6},r_{34}^{1/2},s_4^{3/6},s_0^{4/6},s_1^{3/6},r_{12}^{2/3},r_{01}^{1/2},s_2^{0/6}$\\
5 & $P_3\cup P_2$ (the $P_2$ edge exceptional) & $s_3^{5/6},s_4^{4/6},r_{34}^{0/1},s_0^{4/6},s_1^{3/6},r_{01}^{2/3},r_{12}^{1/3},s_2^{0/6}$\\
5 & $P_4\cup K_1$ & $s_4^{4/6},s_0^{5/6},s_3^{4/6},s_1^{4/6},r_{23}^{2/3},r_{12}^{2/4},r_{01}^{1/2},s_2^{0/6}$\\
\bottomrule
\end{tabularx}
\end{table}

It remains to consider $C_4$, $P_5$, and $C_5$. Let the variable order be spokes first and rim edges second, in the order displayed below. A direct exact expansion of the graph polynomial gives the nonzero coefficients in \cref{tab:coeff}. In the exceptional $P_5$ row, symmetry allows the exceptional rim edge to be taken as $r_{01}$.

\begin{table}[htbp]
\centering
\scriptsize
\caption{Nonzero graph-polynomial coefficients for the non-greedy patches.}\label{tab:coeff}
\begin{tabularx}{\textwidth}{@{}l X r@{}}
\toprule
$J$ & monomial & coefficient\\
\midrule
$C_4$ & $s_0^5s_1^4s_2^3s_3^2r_{01}^3r_{12}^3r_{23}^3r_{30}^3$ & $-2$\\
$P_5$ & $s_0^5s_1^5s_2^4s_3^2s_4^3r_{01}^2r_{12}^3r_{23}^2r_{34}$ & $1$\\
$P_5$, $r_{01}$ exceptional & $s_0^5s_1^5s_2^4s_3^2s_4^3r_{01}r_{12}^3r_{23}^3r_{34}$ & $-1$\\
$C_5$ & $s_0^5s_1^5s_2^5s_3^4s_4^5r_{01}^2r_{12}^3r_{23}^2r_{34}^2r_{40}^2$ & $3$\\
\bottomrule
\end{tabularx}
\end{table}

In every row, each exponent is strictly smaller than the corresponding guaranteed list size, and the total degree is $|E(Q(J))|$. The conclusion follows from \cref{lem:alon}. Exact verification of the four coefficients is included in Appendix~\ref{app:code}.
\end{proof}

\begin{corollary}\label{cor4.8}
Under either geometric hypothesis of \cref{lem4.5}, every $10$-D-coloring of $G-v$ extends to $G$.
\end{corollary}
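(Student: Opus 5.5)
The plan is to assemble the lemmas just proved into a short extension argument.

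\textbf{Setting up.} Fix a $10$-D-coloring $\col$ of $G-v$. By the remark in \cref{sec:prelim}, $G-v$ is an induced subgraph, so $\col$ is a partial D-coloring of $G$. By \cref{lem4.5}, either hypothesis gives $J=G[\N(v)]\subseteq C_d$ with $d=d(v)\in\{4,5\}$. We then uncolor every rim edge of $J$. The restricted coloring of the remaining old edges is still a valid partial D-coloring, since removing colors never creates a conflict. The patch is then exactly the set of spokes $s_i=vx_i$ together with the rim edges.

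\textbf{List sizes.} \cref{lem4.6} gives $|L(s_i)|\ge6$ for every spoke. It gives $|L(x_ix_j)|\ge d_J(x_i)+d_J(x_j)$ for every rim edge, with at most one exception. The exceptional edge is incident with a vertex $x_i$ satisfying $r_i=d_J(x_i)=1$, and its list is smaller by one. These are precisely the hypotheses of \cref{lem4.7}.

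\textbf{Verifying the conflict structure.} We must check that coloring $Q(J)$ from these lists yields a D-coloring of $G$. Three kinds of conflicts need attention.
\begin{itemize}
\item \emph{Patch versus old.} Every conflict between a patch edge and an old edge is already encoded, because the lists $L$ exclude the colors of all old blockers.
\item \emph{Old versus old.} Conflicts between two old edges are unaffected, because they are inherited from the D-coloring of $G-v$. A diamond using $v$ cannot contain two old edges as its opposite pair unless both are in $G-v$. In that case the cross-edges of the diamond also lie in $G-v$, so $\col$ already separates them.
\item \emph{Patch versus patch.} Two patch edges see each other in $G$ exactly when they are adjacent in $Q(J)$. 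This is where care is needed. For nonincident patch edges, the cross-edges among their four endpoints lie in $G[\N[v]]=v\vee J$, since all endpoints lie in $\N[v]$ and $J$ is induced. So $Q(J)$, as defined from $v\vee J$, captures exactly these conflicts.
\end{itemize}

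\textbf{Conclusion.} By \cref{lem4.7}, $Q(J)$ has a proper coloring from the lists. The combined coloring is then a proper edge-coloring in which every pair of seeing edges receives distinct colors, that is, a $10$-D-coloring of $G$. The only real obstacle is the patch-versus-patch check above, and it reduces to the observation that $J$ is an induced subgraph on $\N(v)$.
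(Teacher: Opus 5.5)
Your proposal is correct and follows the paper's argument: you uncolor the rim edges, take the spoke and rim list bounds from \cref{lem4.6}, and color the patch $Q(J)$ from these lists by \cref{lem4.7}. Your explicit check of the old/old, patch/old and patch/patch conflicts spells out what the paper compresses into ``old edges remain valid, and the coloring of $Q(J)$ handles all new edges.''
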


\begin{proof}
Uncolor the rim edges of $J$. By \cref{lem4.6}, their residual lists and the spoke lists satisfy \cref{lem4.7}. Old edges remain valid, and the coloring of $Q(J)$ handles all new edges.
Thus, we get a $10$-D-coloring  of $G$. \end{proof}

\begin{proof}[Proof of the second case of \cref{thm:main}]
Assume that the theorem is false, and choose a counterexample $G$ with the minimum number of vertices. We may assume that $G$ is connected and fix a plane embedding. By \cref{cor4.3},
$ \delta(G)\ge4.$

Suppose first that $G$ has no positive triangular disk. Then every triangle is facial. Choose any vertex $v$.  The graph $G-v$ has a $10$-D-coloring by minimality, and \cref{cor4.8} extends it to $G$, a contradiction.

Thus $G$ has a positive triangular disk. Choose a minimum one, say $(T,D)$ with $T=abc$. If some vertex $v\in\Int(D)$ has at most one neighbor on $T$, then  \cref{cor4.8} again makes $v$ reducible. Consequently, every vertex in $\Int(D)$ is adjacent to at least two vertices of $T$.

Every such interior vertex is a common neighbor of at least one edge of $T$. By \cref{lem4.4}, each of the three edges of $T$ has at most one common neighbor in $\Int(D)$. Therefore
$ |V(G)\cap\Int(D)|\le3$. 

If there is one interior vertex, its degree is at most three, contrary to $\delta(G)\ge4$. If there are two, neither can be adjacent to all three vertices of $T$, since that would occupy the unique interior common-neighbor position of every edge of $T$ and leave the other interior vertex with fewer than two neighbors on $T$. Hence each has exactly two neighbors on $T$ and at most one interior neighbor, again giving degree at most three.

We are left with exactly three interior vertices, say $x,y,z$. Each is adjacent to exactly two vertices of $T$, and the three corresponding boundary pairs are distinct. Relabeling gives
\[
 N(x)\cap V(T)=\{a,b\},\quad
 N(y)\cap V(T)=\{b,c\},\quad
 N(z)\cap V(T)=\{c,a\}.
\]
Since $\delta(G)\ge4$, each of $x,y,z$ must be adjacent to the other two. Thus the subgraph on $\{a,b,c,x,y,z\}$ is the octahedral graph $K_{2,2,2}$. Moreover, each of $x,y,z$ has exactly these four neighbors: an edge from an interior vertex of $D$ to a vertex outside $D$ would cross $T$, and there are no further vertices in $\Int(D)$. Each of $a,b,c$ has degree four inside the octahedral patch and therefore has at most one neighbor outside it.

Delete $x,y,z$ and take a $10$-D-coloring of the remaining graph. Keep the boundary triangle $abc$ colored. The nine new edges are
$xy,yz,zx,\ ax,bx,by,cy,cz,az.$

Each internal edge, say $xy$, has at most two old blockers: its only common old support vertex is $b$, and among the old edges at $b$, only $ab$ and $bc$ see $xy$. Hence
$ |L(xy)|,|L(yz)|,|L(zx)|\ge8$.

For a cross edge, say $ax$, at most three old blockers are incident with $a$. Its common neighbors are $b$ and $z$; the vertex $z$ is deleted, while at $b$ there are at most two additional nonincident blockers, namely $bc$ and, possibly, the unique external edge at $b$ when its other endpoint is also adjacent to $a$. Therefore
 $|L(e)|\ge5$ for every one of the six cross edges.
The same argument applies symmetrically to all cross edges.

Let $Q_\mathrm{oct}$ be the conflict graph of these nine new edges, ordered as
\[
 p_0=xy,\ p_1=yz,\ p_2=zx,\
 q_0=ax,\ q_1=bx,\ q_2=by,\ q_3=cy,\ q_4=cz,\ q_5=az.
\]
Its graph polynomial has
\[
 [p_0^3p_1^4p_2^5q_0^3q_1^3q_2^4q_3^3q_4^4q_5^4]P_{Q_\mathrm{oct}}=2. \tag{4.5}\label{eq:oct-coeff}
\]
The exponents of $p_0,p_1,p_2$ are smaller than $8$, and those of the $q_i$ are smaller than $5$. By \cref{lem:alon}, the nine-edge patch is colorable from its residual lists. This extends the old coloring to all of $G$, the final contradiction. Therefore no counterexample exists.
\end{proof}

\section{Large maximum degree}\label{sec:large}

Fix an integer $\D\ge33$, put
 $K=2\D-1$,
$ \cC=[K]$,
and suppose that $G$ is a planar graph of maximum degree at most $\D$
with the minimum number of vertices among all graphs having no
$K$-D-coloring. Taking a non-$K$-D-colorable component if necessary,
we may assume that $G$ is connected.

\begin{lemma}\label{lem5.1}
The graph $G$ satisfies $\delta(G)\ge3$.
\end{lemma}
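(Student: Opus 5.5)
We argue by deletion. Suppose $G$ has a vertex $v$ with $d_G(v)\le2$. Since $G$ is connected and not $K$-D-colorable, $G$ is not a single vertex, so $d_G(v)\ge1$. By minimality, $H=G-v$ has a $K$-D-coloring $\col$. As noted in \cref{sec:prelim}, $\col$ is a partial D-coloring of $G$. It therefore suffices to bound the lists of the at most two new edges at $v$ and then apply \cref{lem2.1}: one edge needs a list of size at least $1$, and two edges need lists of sizes at least $1$ and $2$. In fact we aim for $|L(vu)|\ge2$ for every $u\in\N_G(v)$.

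\emph{Case $d_G(v)=1$.} Let $\N_G(v)=\{u\}$. Every old edge seeing $vu$ is incident with $u$: a nonincident blocker $ab$ would need three of $va,vb,ua,ub$ to be edges, and $v$ has no neighbor other than $u$. So there are at most $\D-1$ blockers, and $|L(vu)|\ge K-(\D-1)=\D\ge2$.

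\emph{Case $d_G(v)=2$, with $\N_G(v)=\{u,w\}$.} By \cref{lem2.3}, every old blocker of $vu$ has an endpoint in $W_u=\{u\}\cup(\N_{G[U]}(u))$, where $U=\{u,w\}$.
\begin{itemize}
\item If $uw\notin\E(G)$, then $W_u=\{u\}$. There are at most $\D-1$ old blockers, so $|L(vu)|\ge\D$.
\item If $uw\in\E(G)$, the old blockers incident with $u$ number at most $d_H(u)\le\D-1$. A nonincident blocker $ab$ must contain $w$, say $a=w$. It then needs three of $vw,vb,uw,ub$, and since $vb\notin\E(G)$ this forces $ub\in\E(G)$. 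So $b$ is a common neighbor of $u$ and $w$ in $H$, and there are at most $\D-2$ such vertices besides $v$. The total is at most $2\D-3$ blockers, giving $|L(vu)|\ge K-(2\D-3)=2$. The same holds for $vw$.
\end{itemize}
In every case both new edges have lists of size at least $2$. The two new edges see each other, and \cref{lem2.1} colors them with distinct colors. This yields a $K$-D-coloring of $G$, a contradiction, so $\delta(G)\ge3$.

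The only delicate point is the count in the triangle case $uw\in\E(G)$. There the budget is exactly tight: $K=2\D-1$ colors against $2\D-3$ blockers leaves lists of size exactly $2$, which is just enough. To get this bound one must check that the edge $uw$ is counted only once, namely among the edges at $u$ and not again as a blocker through $w$, and that $v$ itself is excluded from the common neighbors of $u$ and $w$.
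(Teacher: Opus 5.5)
Your proof is correct and follows essentially the same route as the paper: delete a vertex of degree at most two, get lists of size $K-(\D-1)=\D$ when $v$ lies in no triangle, and in the triangle case bound the old blockers by $2\D-3$, so that both new edges have lists of size at least two. The only difference is cosmetic: the paper bounds the blockers of either new edge by the union of all old edges at $u$ or $w$, of size at most $d_{G-v}(u)+d_{G-v}(w)-1$, whereas you count, edge by edge, the old edges at $u$ plus the edges $wb$ with $b\in\N(u)\cap\N(w)\setminus\{v\}$, which gives the same bound.
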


\begin{proof}
 If $v$ is a $1$-vertex with neighbor $u$, then after coloring $G-v$, the edge $vu$ has at most $\D-1$ old blockers and hence at least
$ K-(\D-1)=\D$
available colors.

Let $d(v)=2$ and $\N(v)=\{u,w\}$. If $uw\notin\E(G)$, then there are at least $\D$ available colors for each of $vu,vw$. If $uw\in\E(G)$, every old blocker of either new edge is incident with $u$ or $w$. The union of those old edges has size at most
$d_{G-v}(u)+d_{G-v}(w)-1\le2\D-3$. 
Thus both lists have size at least $K-(2\D-3)=2$, and the two new edges can be assigned distinct colors. This contradicts minimality.
\end{proof}

We may therefore apply \cref{lem:structure}. We treat (B1) and (B2) separately.

\subsection{Configuration (B1)}

Let $v$ be the center of (B1), with $d(v)=k\in\{3,4,5\}$, and put
\[
 U=\N(v)=\{v_1,\ldots,v_k\},
 \qquad
 J=G[U].
\]
By \cref{lem2.4}, the graph $J$ is outerplanar. 
By (B1),   $v_k$ is  the only neighbor that may have degree greater than $25$. For $u\in U$, define
\[
 r(u)=d_J(u),
 \qquad
 s(u)=e_J(\N_J(u)),
 \qquad
 W_u=\N_J[u].
\]
Then $e_J(W_u)=r(u)+s(u)$.

\begin{lemma}\label{lem5.2}
After a $K$-D-coloring of $G-v$ is fixed, every edge $vu$ satisfies
\[
 |L(vu)|\ge
 \begin{cases}
 \D+3k-41-r(u)+s(u),&v_k\in W_u,\\[1mm]
 2\D+3k-44-r(u)+s(u),&v_k\notin W_u.
 \end{cases} \tag{5.1}
\]
In particular, if $v_k\in W_u$, then
\[
 |L(vu)|\ge3k-8-r(u)+s(u). \tag{5.2}
\]
\end{lemma}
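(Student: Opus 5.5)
The plan is to count old blockers with \cref{lem2.3} and bound the degree sum over $W_u$ using (B1) together with $\delta(G)\ge3$ from \cref{lem5.1}. Set $H=G-v$. By \cref{lem2.3}, the number of old edges seeing $vu$ is at most $\sum_{z\in W_u}d_H(z)-e_H(W_u)$. Since $|L(vu)|\ge K-(\text{number of old blockers})$ and $K=2\D-1$, it suffices to bound this quantity from above.

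First I simplify the two terms. Every $z\in W_u\subseteq U$ is adjacent to $v$, so $d_H(z)=d_G(z)-1$. We have $|W_u|=r(u)+1$. Since $W_u\subseteq U$ and $v\notin W_u$, we get $e_H(W_u)=e_J(W_u)=r(u)+s(u)$. Writing $r=r(u)$ and $s=s(u)$, the number of old blockers is at most
\[
 \sum_{z\in W_u}d_G(z)-(r+1)-(r+s).
\]

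The key step is bounding $\sum_{z\in W_u}d_G(z)$ using (B1). The point is not to discard the vertices of $U\setminus W_u$: each has degree at least $3$ by \cref{lem5.1}, so it uses up at least $3$ of the budget $\sum_{i=1}^{k-1}d_G(v_i)\le38$.

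If $v_k\in W_u$, then $W_u\setminus\{v_k\}$ consists of $r$ vertices among $v_1,\ldots,v_{k-1}$. The other $k-1-r$ vertices among $v_1,\ldots,v_{k-1}$ have degree at least $3$. Using $d_G(v_k)\le\D$, this gives $\sum_{z\in W_u}d_G(z)\le \D+38-3(k-1-r)=\D+41-3k+3r$. The number of old blockers is then at most $\D+40-3k+r-s$, so $|L(vu)|\ge 2\D-1-(\D+40-3k+r-s)=\D+3k-41-r+s$.

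If $v_k\notin W_u$, then $W_u\subseteq\{v_1,\ldots,v_{k-1}\}$, and $k-2-r$ of these vertices lie outside $W_u$. This gives $\sum_{z\in W_u}d_G(z)\le 38-3(k-2-r)=44-3k+3r$. The number of old blockers is then at most $43-3k+r-s$, which yields $|L(vu)|\ge 2\D+3k-44-r+s$. These two cases give (5.1).

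Finally, (5.2) follows from the first line of (5.1) because $\D\ge33$ gives $\D-41\ge-8$. The only delicate point is remembering to charge the minimum-degree contribution of the neighbors outside $W_u$ against the budget $38$; once that is done, the rest is arithmetic.
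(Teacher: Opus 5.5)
Your proof is correct and is essentially the paper's argument: both apply \cref{lem2.3}, note that $e_H(W_u)=e_J(W_u)=r(u)+s(u)$, and charge the neighbors outside $W_u$ against the (B1) degree budget using $\delta(G)\ge3$. The only difference is bookkeeping. The paper bounds $\sum_{z\in U}d_{G-v}(z)\le\D+38-k$ and subtracts $2$ for each neighbor outside $W_u$, whereas you work with $d_G$, subtract $3$ for each such neighbor, and then subtract $|W_u|=r(u)+1$; the two computations give the same bounds.
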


\begin{proof}
By \cref{lem2.3}, the number of old blockers is at most
\[
 \sum_{z\in W_u}d_{G-v}(z)-e_J(W_u).
\]
Moreover,
\[
 \sum_{z\in U}d_{G-v}(z)
 =\sum_{z\in U}d_G(z)-k
 \le \D+38-k.\]

Suppose first that $v_k\in W_u$. The set $U\setminus W_u$ contains $k-1-r(u)$ controlled neighbors. Each has degree at least $2$ in $G-v$, by \cref{lem5.1}. Hence the blocker count is at most
\[
 \D+38-k-2(k-1-r(u))-[r(u)+s(u)]
 =\D+40-3k+r(u)-s(u).
\]
Subtracting from $K=2\D-1$ gives the first line of (5.1).

If $v_k\notin W_u$, then $W_u$ consists entirely of controlled neighbors. Their total degree in $G-v$ is at most $38-(k-1)=39-k$. There are $k-2-r(u)$ controlled neighbors outside $W_u$, each of degree at least $2$ in $G-v$. Thus the blocker count is at most
\[
 39-k-2(k-2-r(u))-[r(u)+s(u)]
 =43-3k+r(u)-s(u),
\]
which gives the second line. Finally, (5.2) follows from $\D\ge33$.
\end{proof}

\begin{lemma}\label{lemB1}
If $G$ contains configuration \textup{(B1)}, then it has a $K$-D-coloring.
\end{lemma}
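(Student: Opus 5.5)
The plan is to extend a coloring of $G-v$. Let $v$ be the center of (B1), with $d(v)=k$; by \cref{lem5.1}, $k\in\{3,4,5\}$. By minimality, $G-v$ has a $K$-D-coloring $\col$. The $k$ new edges $vv_1,\ldots,vv_k$ share $v$, so they pairwise see each other. By \cref{lem2.1} it is therefore enough to order them so that the $i$-th edge has at least $i$ available colors. The lists are bounded below by \cref{lem5.2}.

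First I split $U$ according to whether $v_k\in W_u$. If $v_k\notin W_u$, the second line of (5.1) gives $|L(vu)|\ge 2\D+3k-44-r(u)+s(u)\ge 22+3k-r(u)$. Since $r(u)\le k-1$ and $s(u)\ge0$, this is far larger than $k$, so all such edges can be placed last in the order. The real work concerns the set $A=\{v_k\}\cup N_J(v_k)$, which indexes the edges $vu$ with $v_k\in W_u$. It has $|A|=1+r(v_k)$ elements, and (5.2) gives $|L(vu)|\ge 3k-8-r(u)+s(u)$ for each of them. I would tabulate, using the fact from \cref{lem2.4} that $J$ is outerplanar, all outerplanar $J$ on $k$ labelled vertices together with the position of $v_k$. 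For $k=5$ and for most cases with $k=4$, the numbers $3k-8-r(u)+s(u)$, sorted increasingly over $u\in A$, satisfy $b_i\ge i$; this holds because a large $r(u)$ tends to come with positive $s(u)$ (the fans and triangles of an outerplanar graph). Those cases then close directly by \cref{lem2.1}.

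The main obstacle is the residual tight cases, mainly $k=3$ (and possibly $k=4$) where $J$ is a path or a star centred at some $u\in A$. There (5.2) gives a bound $\le 1$, or even $\le 0$, for the edges nearest $v_k$. In these cases I would sharpen \cref{lem2.3}. For a controlled vertex $z\in N_J(u)$, an old edge $zw$ with $w\ne u$ sees $vu$ only when $w\in U\cup N_G(u)$, so not all $d_{G-v}(z)$ edges at $z$ count. In particular, the estimate in \cref{lem5.2} charges every edge at every controlled neighbor. If $z$ is controlled and is not a common neighbor of $u$ with other vertices, then most of its edges are not blockers, and we recover the missing one or two colors.

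If the sharper count still leaves a deficit, I would fall back on a local recoloring. Uncolor one old edge $zw$ at a controlled neighbor $z$; this is cheap, because $d(z)\le 25$. The patch then consists of the $k$ spokes together with $zw$. Since $zw$ has at most about $\D+25$ blockers, it gets a list of size at least $\D-26\ge7$. I would then color this small patch either greedily, taking the spokes adjacent to $v_k$ first, or via a nonzero coefficient in \cref{lem:alon}, as in \cref{sec:five}. I expect this case analysis for $k=3$, with $J$ a path centred at a neighbor of $v_k$, to be the delicate point.
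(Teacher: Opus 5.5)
You have the paper's framework: delete $v$, extend via \cref{lem2.1}, and place the edges $vu$ with $v_k\notin W_u$ last. Your claim for $k=5$ is also correct. The gap is in the cases where (5.2) fails, which carry the content of the lemma. For these you offer only a heuristic that breaks in the decisive cases, plus a fallback built on a false estimate.

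The failing cases are not only paths and stars. For $k=4$ they are $J=C_4$ and every $J$ with $d_J(v_k)=3$ (star, paw, diamond). These close by applying \cref{lem2.3} with exact degrees and the individual bounds $d_G(v_i)\le 25$. For $C_4$, one of the three pairwise degree sums is at most $25$. For the diamond with $J-v_k=abc$, two end lists of size at most $3$ would force $d_G(b)\ge 32$.

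The critical cases are $k=3$ with $J=P_3$ centred at $h=v_k$, and $J=K_3$. The problem edge is $vh$, and $d_G(h)$ may equal $\Delta$. Every edge from $a$ or $b$ into $N_G(h)$ genuinely sees $vh$, so your claim that ``most edges at a controlled $z$ are not blockers'' is false there. Bounding these edges by $d_G(a)+d_G(b)-4\le 34$ only gives $|L(vh)|\ge\Delta-34$, which guarantees nothing for $\Delta\in\{33,34\}$.

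The missing idea is to apply \cref{lem2.4} at $h$ rather than at $v$. In the outerplanar graph $G[N_G(h)]$, the vertices $a,b$ already share the neighbor $v$, so they have at most one further common neighbor. Hence at most $d_G(h)-2$ edges join $\{a,b\}$ to $N_G(h)\setminus\{v,a,b\}$. Adding the $d_G(h)-1$ incident blockers (and $ab$ when $J=K_3$) gives $|L(vh)|\ge 2$, resp.\ $\ge 1$, with no slack.

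Likewise, for $J=h\,a\,b$ (with $h$ an end) the saving for $va$ must be taken at the big vertex $h$: only its at most $d_G(a)-3$ edges into $N_G(a)$ count. You apply the sharpening at controlled vertices instead, and \cref{lem2.3} alone again gives only $\Delta-34$.

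Your fallback does not close the gap. The bound of about $\Delta+25$ blockers for an uncolored old edge $zw$ counts only incident blockers. Take $w=h$ with $G[N_G(h)]$ containing a fan centred at a common neighbor $p$ of $z$ and $h$, as in $K_2\vee P_m$ with $v$ placed in a face $h\,a\,b$. Then every edge $pq$ with $q\in N_G(h)$ is a nonincident blocker of $zh$, so $zh$ can have close to $2\Delta$ blockers and no free color. Also, uncoloring one blocker of $vh$ need not enlarge $L(vh)$, since another blocker may carry the same color. In the paper no recoloring is needed for (B1): every case closes by \cref{lem2.1} once the sharp counts above are in place.
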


\begin{proof}

We consider the following three cases. 

\paragraph{Case 1: $k=5$.}

If $v_k\in W_u$, (5.2) gives
$ |L(vu)|\ge7-r(u)+s(u). $
Thus a vertex of degree at most $2$ in $J$ gives a list of size at least $5$, a $3$-vertex gives a list of size at least $4$, and a $4$-vertex gives a list of size at least $3$. If $v_k\notin W_u$, the second line of (5.1) gives a much larger list.

The outerplanar graph $J$ has at most one $4$-vertex: two $4$-vertices together with the remaining three vertices would contain a $K_{2,3}$. Also, $J$ has a vertex of degree at most $2$. Hence the five list sizes, in nondecreasing order, satisfy
$\ell_1\ge3$, $\ell_2,\ell_3,\ell_4\ge4$, and $\ell_5\ge5$.
They satisfy the hypotheses of \cref{lem2.1}.

\paragraph{Case 2: $k=4$.}

For $v_k\in W_u$, (5.2) becomes
\[
 |L(vu)|\ge4-r(u)+s(u), \tag{5.3}
\]
while $v_k\notin W_u$ gives $|L(vu)|\ge31$. We distinguish the value of $d_J(v_k)$.

If $d_J(v_k)=0$, the list of $vv_k$ has size at least $4$, and the other three lists are large. If $d_J(v_k)=1$, say $\N_J(v_k)=\{a\}$, then $|L(vv_k)|\ge3$, $|L(va)|\ge1$, and the other two lists have size at least $31$.

Suppose that $d_J(v_k)=2$, with $\N_J(v_k)=\{a,c\}$, and let $b$ be the fourth vertex. Formula (5.3) gives at least two colors for each of $vv_k,va,vc$. If all three lower bounds are exactly two, then $ac\notin\E(J)$, and both $a$ and $c$ must be adjacent to $b$; hence $J$ is the cycle $v_k a b c v_k$. If $J\ne C_4$, at least one of these three lists has size at least three, while $vb$ has at least $31$ colors, and \cref{lem2.1} applies.

It remains to consider $J=C_4$ with cyclic order $v_k,a,b,c$. Put $x_a=d_G(a)$, $x_b=d_G(b)$, and $x_c=d_G(c)$. Then $x_a+x_b+x_c\le38$. Applying \cref{lem2.3} exactly gives
\[
\begin{aligned}
 |L(vv_k)|&\ge \D+4-(x_a+x_c), \quad |L(va)|\ge \D+4-(x_a+x_b),\\
 |L(vc)|&\ge \D+4-(x_b+x_c),\quad |L(vb)|\ge 2\D+4-(x_a+x_b+x_c).
\end{aligned} 
\]
Since every controlled neighbor has degree at least $3$, every two-term sum is at most $35$, so the first three lists have size at least $\D-31\ge2$. The fourth has size at least $2\D-34\ge32$. Moreover, the sum of the three two-term sums is at most $76$, so one is at most $25$; the corresponding list has size at least $\D-21\ge12$. Thus the sorted list sizes satisfy the increasing condition.

Finally, suppose that $d_J(v_k)=3$. The graph $F=J-v_k$ is either $3K_1$, $K_2\cup K_1$, or $P_3$, because $J$ is outerplanar. Put
$S=d_G(a)+d_G(b)+d_G(c)\le38$.
If $F=3K_1$, then $|L(vv_k)|\ge\D-32\ge1$, while every other list has size at least $\D-23\ge10$. If $F=K_2\cup K_1$, then the four list sizes are bounded below by
$ 2,\ 3,\ 3,\ \D-23$ up to order. Now let $F$ be the path $a b c$. The lists of $vv_k$ and $vb$ have size at least $\D-30\ge3$, and
\[
 |L(va)|\ge\D+5-[d_G(a)+d_G(b)],
 \qquad
 |L(vc)|\ge\D+5-[d_G(b)+d_G(c)]. 
\]
Both end lists have size at least $\D-30\ge3$. If both had size at most $3$, then
$ d_G(a)+d_G(b)\ge\D+2$,
 $d_G(b)+d_G(c)\ge\D+2$,
and therefore $S+d_G(b)\ge2\D+4$. Since $S\le38$ and $\D\ge33$, this would imply $d_G(b)\ge32$, contrary to $d_G(b)\le25$. Thus at least one end list has size at least four, and again the increasing condition holds.

\paragraph{Case 3: $k=3$.}

Let $a,b$ be the controlled neighbors, so
 $d_G(a),d_G(b)\le25$,
 $d_G(a)+d_G(b)\le38$,
and let $h$ be the third neighbor. The possibilities for $J$ and convenient lower bounds are listed in \cref{tab:k3}.

\begin{table}[htbp]
\centering
\small
\renewcommand{\arraystretch}{1.16}
\begin{tabular}{c c c}
\toprule
$J$ & position of $h$ & convenient lower bounds\\
\midrule
$3K_1$ & -- & $(\D,\,2\D-25,\,2\D-25)$\\
$K_2\cup K_1$ & $h$ on the edge & $(\D-23,\,\D-23,\,2\D-25)$\\
$K_2\cup K_1$ & $h$ isolated & $(2\D-36,\,2\D-36,\,\D)$\\
$P_3$ & $h$ central & $(2,\,2\D-49,\,2\D-49)$\\
$P_3$ & $h$ an end & $(2\D-58,\,\D-23,\,2\D-36)$\\
$K_3$ & -- & $(1,\,2\D-49,\,2\D-49)$\\
\bottomrule
\end{tabular}
\caption{List lower bounds for the B1 configuration with $k=3$.}
\label{tab:k3}
\end{table}

For completeness, we verify the nontrivial rows. If $J=P_3$ and $h$ is the center, then in the outerplanar graph $G[\N(h)]$ the two controlled neighbors $a,b$ have the common neighbor $v$, and hence at most one further common neighbor. Thus the number of edges from $\{a,b\}$ to $\N(h)\setminus\{v,a,b\}$ is at most $d_G(h)-2$. The edge $vh$ has at most
$(d_G(h)-1)+(d_G(h)-2)\le2\D-3$
blockers, and hence at least two available colors. For $va$, the nonincident blockers consist of the edge $hb$ and edges $hq$ with $q\in\N(a)\cap\N(h)$; there are at most $d_G(a)-1$ of them. Including the $d_G(a)-1$ incident blockers gives $|L(va)|\ge2\D-49$, and similarly for $vb$.

If $J=P_3$ with order $h-a-b$, then \cref{lem2.3} gives the first two
bounds below. For the third, the incident blockers of $va$ number at
most $d_G(a)-1$. The nonincident blockers supported at $h$ number at
most $d_G(a)-3$, while those supported at $b$ number at most
$d_G(b)-2$. Thus $va$ has at most
$2d_G(a)+d_G(b)-6$ blockers. Consequently,
\[
\begin{aligned}
 |L(vh)|&\ge 2\Delta-1-(d_G(h)+d_G(a)-3)\ge2\D-1-(\D+22)\ge \D-23,\\
 |L(vb)|&\ge  2\Delta-1-(d_G(a)+d_G(b)-3)\ge 2\Delta-1-35\ge 2\D-36,\\
 |L(va)|&\ge 2\Delta-1-(2d_G(a)+d_G(b)-6)\ge 2\Delta-1-57\ge 2\D-58,
\end{aligned}
\]
which yields the corresponding row of the table.

Finally, suppose $J=K_3$. For $vh$, the nonincident blockers are $ab$ and the edges from $\{a,b\}$ to $\N(h)\setminus\{v,a,b\}$. Since $G[\N(h)]$ is outerplanar and $a,b$ already have the common neighbor $v$, the latter set has at most $d_G(h)-2$ edges. Thus $vh$ has at most $2d_G(h)-2\le2\D-2$ blockers and at least one available color. For $va$, the triangle $hab$ already has the complete neighbor $v$. And among the remaining neighbors of $a$ at most one is adjacent to both $h$ and $b$. Hence the number of nonincident blockers is at most $d_G(a)-1$, and $|L(va)|\ge2\D-49$; similarly for $vb$.

Every row of \cref{tab:k3} satisfies the increasing-list condition when $\D\ge33$. Therefore all B1 configurations are reducible.
\end{proof}

\subsection{Configuration (B2)}

Let $B(x,y;m)$ be a bunch satisfying (B2). Since $d_G(x)\ge26$ and $m\ge d_G(x)/5$, we have $m\ge6$.

\begin{lemma}\label{lem5.4}
If $ab$ joins a strictly internal brother of a bunch to an adjacent
internal brother, then $ab$ sees at most eleven other edges.
\end{lemma}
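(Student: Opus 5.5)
We will argue directly from \cref{lem:bunch-geometry}, by splitting the edges that see $ab$ into incident and nonincident ones and counting each kind. By symmetry of the bunch, write $a=z_i$ with $3\le i\le m-2$ and $b=z_{i+1}$; the case $b=z_{i-1}$ is the mirror image. Since $b$ is adjacent to $a$, it is internal by \cref{lem:bunch-geometry}. Applying the same lemma to both endpoints gives
\[
 N_G(a)\subseteq\{x,y,z_{i-1},z_{i+1}\},\qquad N_G(b)\subseteq\{x,y,z_i,z_{i+2}\},
\]
where $z_{i+2}$ is read as absent if $i+1=m$.

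\emph{Incident edges.} Both $d_G(a)$ and $d_G(b)$ are at most four. Hence at most $3+3=6$ edges other than $ab$ share an endpoint with $ab$.

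\emph{Nonincident edges.} Let $cd$ be disjoint from $ab$ and lie in a common diamond with it, so at least three of $ac,ad,bc,bd$ are edges. Then one endpoint, say $c$, is a common neighbor of $a$ and $b$. From the displayed inclusions, $N(a)\cap N(b)\subseteq\{x,y\}$, because $z_{i-1}\notin N(b)$ and $z_{i+2}\notin N(a)$. So $c\in\{x,y\}$. The other endpoint $d$ is adjacent to $a$ or $b$ and differs from $a$ and $b$, so
\[
 d\in(N(a)\cup N(b))\setminus\{a,b\}\subseteq\{x,y,z_{i-1},z_{i+2}\}.
\]
The candidate edges are therefore $xy$ together with $xz_{i-1}$, $yz_{i-1}$, $xz_{i+2}$, $yz_{i+2}$. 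Here $xy$ occurs only when a parental edge is present, and each of the others only when that brother exists. This gives at most five nonincident edges seeing $ab$.

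Adding the two counts, $ab$ sees at most $6+5=11$ other edges.

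The only point needing care is the identification $N(a)\cap N(b)\subseteq\{x,y\}$. This is where the internality of both endpoints is used: it pins down both neighborhoods via \cref{lem:bunch-geometry}. Strict internality of $a$ is what guarantees that its brother-neighbor $b$ is itself internal. Everything else is bookkeeping.
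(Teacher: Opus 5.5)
Your proof is correct, but it finishes differently from the paper's. The paper never identifies the blockers individually. It observes that every edge seeing $ab$ has both endpoints in $X=N[a]\cup N[b]$, and that $|X|\le 6$ because $a,b$ have degree at most four and share the poles $x,y$. It then applies the planar edge bound $3|X|-6\le 12$ to $G[X]$ and subtracts $ab$.

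You instead split the blockers into incident and nonincident ones. You use the full neighborhood inclusions of \cref{lem:bunch-geometry} for both endpoints to get $N(a)\cap N(b)\subseteq\{x,y\}$. You then enumerate the five possible nonincident blockers $xy$, $xz_{i-1}$, $yz_{i-1}$, $xz_{i+2}$, $yz_{i+2}$.

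The paper's route is shorter and needs less input: only the degree bounds and the common poles, not the exact common neighborhood. Planarity absorbs the bookkeeping, including a possible edge $z_{i-1}z_{i+2}$, which is counted among the twelve even though it cannot see $ab$. Your route uses no planarity beyond what is already built into the bunch geometry, and it pins down exactly which edges can block $ab$. That is more informative if one later wants to use the specific colors of these blockers.

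One small remark: your closing comment that strict internality of $a$ is what makes $b$ internal is redundant. Internality of $b$ is already part of the hypothesis.
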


\begin{proof}
Put $X=\N[a]\cup\N[b]$. By \cref{lem:bunch-geometry}, both brothers
have degree at most $4$ and have the poles $x,y$ as common neighbors, so
$|X|\le2+3+3-2=6.$
Every edge seeing $ab$ has both endpoints in $X$: this is clear for an incident edge; for a nonincident edge, one endpoint is a common neighbor of $a,b$ and the other is adjacent to at least one of them. Hence all edges seeing $ab$ lie in $G[X]$. Since $G[X]$ is planar and $|X|\le6$, it has at most $12$ edges. Excluding $ab$ gives the bound eleven.
\end{proof}

\begin{lemma}\label{lemB2}
If $G$ contains configuration \textup{(B2)}, then it has a $K$-D-coloring.
\end{lemma}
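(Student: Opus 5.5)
The plan is a minimal-counterexample reduction on the bunch $B(x,y;m)$ with $m\ge6$. Since $m\ge6$, there are strictly internal brothers $z_3,\dots,z_{m-2}$. By \cref{lem:bunch-geometry}, each such brother has degree at most $4$, and all its neighbors lie among $x,y$ and adjacent internal brothers. I will delete a suitable strictly internal brother, or the edge between two adjacent internal brothers, color the smaller graph by minimality, and then recolor a local patch.

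\textbf{Step 1: rim edges.} If some strictly internal brother $a$ is adjacent to an internal brother $b$, remove the edge $ab$. The graph $G-ab$ is still planar with maximum degree at most $\D$, so it has a $K$-D-coloring. This coloring is not automatically a partial D-coloring of $G$. Removing $ab$ may destroy diamonds, so edges $xa,yb$ (say) that saw each other only through $ab$ may now share a color. Therefore I will uncolor all edges of $G[X]$ that see $ab$ or whose conflict depends on $ab$. By \cref{lem5.4}, these all lie in a set $X$ of at most six vertices $\{x,y,a,b,\ldots\}$. The edges at $x$ and $y$ are the delicate ones, because each pole sees up to $2\D-2$ other colored edges.

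The cleaner route is to delete the brother $a$ itself instead. Then $G-a$ is an induced subgraph, so its coloring is a genuine partial D-coloring of $G$ by the Preliminaries. The new edges are then $xa$, $ya$, and at most two rim edges $az_{i\pm1}$. Each rim edge sees at most eleven edges by \cref{lem5.4}, so it has at least $K-11\ge 54$ free colors. The spokes $xa$ and $ya$ are the bottleneck.

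\textbf{Step 2: the spokes.} Let $xa$ be a spoke. Its incident blockers at $x$ number at most $d(x)-1\le\D-1$. Its nonincident blockers have an endpoint among the common neighbors of $x$ and $a$, namely $y$, $z_{i-1}$, $z_{i+1}$. The blockers at $y$ are the edges $yw$ with $w\in N(x)$, and in a bunch there can be about $m$ of these (all the $yz_j$). So $|L(xa)|$ may be near $2\D-1-(\D-1)-m-O(1)$, which is positive but can drop to $0$ when the book at $xy$ is large. This is the main obstacle. The edges $xz_j$ and $yz_j$, over all brothers, together with $xy$ form the set $\cB(xy)$, whose edges pairwise see each other. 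Hence we cannot simply count.

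\textbf{Step 3: common palette recoloring (the hard step).} Because $m\ge d(x)/5$ is large, I plan to uncolor the whole set of bunch edges $\{xz_j,yz_j: z_j \text{ strictly internal}\}$ and the rim edges among them. I will then recolor them from a common palette. The colors available to $xz_j$ are those not used on old edges at $x$ outside the bunch, and not on old blockers through $y$ and through the neighboring brothers. Since the uncolored set contains about $2m$ of the bunch edges, the counts should give each $xz_j$ and $yz_j$ a list of size at least (number of uncolored edges in $\cB(xy)$ it conflicts with) plus a small slack. The slack comes from the at most $O(1)$ blockers through the brothers, where $O(1)$ follows from degree at most $4$. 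This is the blocker support estimate. I would first show that all these lists contain a common set $P$ of size at least the number of uncolored $\cB(xy)$-edges. I would then color the $\cB(xy)$ edges injectively from $P$, avoiding the at most two conflicts from the end brothers $z_2,z_{m-1}$. Finally I would color the rim edges greedily using \cref{lem2.1}, since each has at least $54$ free colors. The parental edge $xy$, if present, stays colored and is treated as an old blocker seen by all bunch edges. The hypothesis $\D\ge33$ should enter when verifying that $|P|$ exceeds the demand once $\D-1$ old edges at $x$ and $\D-1$ at $y$ are accounted for.
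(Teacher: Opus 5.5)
Your Step~3 is where the proof has to happen, and as stated it does not go through, because the common palette has no slack. With the parental edge present and the spokes of $t$ brothers uncolored, $|C(x)\cup C(y)|\le (d(x)-t)+(d(y)-t)-1$. Hence $|P|\ge 2\Delta-d(x)-d(y)+2t$, which equals $2t$ when $d(x)=d(y)=\Delta$. That is exactly the number of uncolored edges of $\mathcal{B}(xy)$, and the hypothesis $\Delta\ge33$ adds nothing. The end conflicts are the colors of $z_1z_2$ and $z_{m-1}z_m$, which see the spokes at $z_3$ and $z_{m-2}$ through $z_2$ and $z_{m-1}$; these colors may lie in $P$. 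For $t\ge3$ Hall's condition still saves you, since the at most four end spokes have lists of size at least $2t-1\ge5$. For $t=2$ it can fail. Take the bunch $z_1,z_2,z_3,xy,z_5,z_6,z_7$ with $m=7$, which is compatible with (B2) when $d(x)=\Delta\le35$. Suppose $d(x)=d(y)=\Delta$ and $N(x)\setminus\{y\}=N(y)\setminus\{x\}$. Then all colored edges of $\mathcal{B}(xy)$ carry distinct colors, so $|P|=4$. The edges $z_1z_2$ and $z_6z_7$ do not see each other, and nothing prevents the inherited coloring from giving both the same color $c\in P$. Then $xz_3,yz_3,xz_5,yz_5$ pairwise see each other, yet each has list exactly $P\setminus\{c\}$, of size $3$.

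The missing idea is the paper's treatment of this tight situation (its Case~3). Delete only $z_3$, and count blockers at $y$ through $r=|N(x)\cap N(y)|$ rather than through all of $C(y)$. Failure then forces (5.4)--(5.5), so $P$ has exactly two colors. If $\gamma=\varphi(z_1z_2)\in P$, one uncolors $z_1z_2$, puts the two colors of $P$ on the spokes, and then recolors $z_1z_2$. This last step works because Lemma~\ref{lem2.5} gives $z_1z_3\notin E(G)$, and the triangle $xyz_1$ has at most one common neighbor besides $z_2$; so $z_1z_2$ sees at most $\Delta+7<K$ colored edges.

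Separately, your Step~2 is wrong when there is no parental edge. Then $y$ is not a common neighbor of $x$ and $a$, and edges $yz_j$ with $z_ja\notin E(G)$ do not see $xa$. Deleting one strictly internal brother already leaves each spoke with at most $\Delta+6$ blockers, which is the paper's Case~1. Your uniform recipe would actually break in that case: without the shared color of $xy$ you only get $|P|\ge 2t-1$ for the $2t$ edges you propose to color injectively.
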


\begin{proof}

We consider the following three cases.

\paragraph{Case 1: no parental edge.}

Choose a strictly internal brother $z=z_3$. Delete $z$ and color the remaining graph. The spoke $xz$ has at most $d_G(x)+d_G(z)-2\le\D+2$ other incident edges; this deliberately overcounts the old incident blockers. Each of the at most two neighboring brothers contributes at most two further nonincident blockers. Thus
$ |L(xz)|,|L(yz)|\ge2\D-1-(\D+6)\ge\D-7\ge26$. Color the two spokes differently. By \cref{lem:bunch-geometry}, every brother adjacent to $z$ is internal, so the at most two brother-edges at $z$ can be restored one at a time using \cref{lem5.4}.

\paragraph{Case 2: a parental edge and at least four brothers on one
side.}

Let the parental edge be $xy$, and suppose that one side contains brothers $z_1,\ldots,z_p$ in order, with $p\ge4$ and $z_p$ closest to $xy$.

Here $z_p$ is internal and $z_{p-1}$ is strictly internal. By
\cref{lem:bunch-geometry}, $z_p$ is adjacent only to
$x,y,z_{p-1}$. Since $\delta(G)\ge3$, the edge $z_{p-1}z_p$ is
present. Delete $z_{p-1},z_p$ and color the remaining graph $H$. Let
$ P=\cC\setminus(C_H(x)\cup C_H(y))$. We have
$
 |P|\ge K-[(\D-2)+(\D-2)-1]=4.
$
Every color in $P$ is available for $xz_p$ and $yz_p$. For $xz_{p-1}$ and $yz_{p-1}$, the only possible additional old blocker whose color may lie in $P$ is the other brother-edge at $z_{p-2}$, namely $z_{p-2}z_{p-3}$, if it exists. If its color is $c$, the four spoke lists contain
 $P, P, P\setminus\{c\}, P\setminus\{c\}$. 
Then they can be colored because $|P|\ge4$. After the four spokes are colored distinctly, restore $z_{p-1}z_p$ and the possible edge $z_{p-1}z_{p-2}$ by \cref{lem5.4}; in each case the strictly internal endpoint is $z_{p-1}$.

\paragraph{Case 3: a parental edge and the short parental bunch.}

It remains to consider the case in which the longer side of $xy$ has exactly three brothers, say
 $z_1,z_2,z_3,xy$,
where $z_3$ is closest to the parental edge. Since this is the longer
side and $m\ge6$, we have $6\le m\le7$; moreover, $z_2$ is strictly
internal and $z_3$ is internal. By \cref{lem:bunch-geometry}, the only
possible brother neighbor of $z_3$ is $z_2$. Since $\delta(G)\ge3$,
the edge $z_2z_3$ is present.

Delete $z_3$ and color $H=G-z_3$. Put $
 R=\N_H(x)\cap\N_H(y)$, $ r=|R|$, 
and let $\eps=1$ if $z_1z_2\in\E(G)$ and $\eps=0$ otherwise. For $xz_3$, the old blockers are covered by the $d_G(x)-1$ old edges at $x$, the $r$ edges $yq$ with $q\in R$, and the possible edge $z_1z_2$. Consequently,
 $|L(xz_3)|\ge2\D-d_G(x)-r-\eps$, 
 $|L(yz_3)|\ge2\D-d_G(y)-r-\eps$.

Since $r\le d_G(x)-2,d_G(y)-2\le\D-2$, both lists are nonempty. Unless
\[
 d_G(x)=d_G(y)=\D,\qquad r=\D-2,\qquad \eps=1, \tag{5.4}
\]
one list has at least two colors and the other is nonempty, so the two spokes can be colored differently. The edge $z_2z_3$ is then restored using \cref{lem5.4}.

Assume (5.4). Since $d_H(x)=d_H(y)=\D-1$ and $|R|=\D-2$,
$\N_H(x)\setminus\{y\}=\N_H(y)\setminus\{x\}=R$. 
After restoring $z_3$,
\[
 \N_G(x)\setminus\{y\}=\N_G(y)\setminus\{x\}. \tag{5.5}
\]
By the definition of $D$-coloring, the $2\D-3$ edges
$ \{xy\}\cup\{xq,yq:q\in R\}$
receive pairwise distinct colors. Hence
$ P=\cC\setminus(C_H(x)\cup C_H(y))$
has exactly two colors. Let $\gamma$ be the color of $z_1z_2$. If $\gamma\notin P$, use the two colors of $P$ on $xz_3,yz_3$. If $\gamma\in P$, first uncolor $z_1z_2$, and then use the two colors of $P$ on the two spokes.

It remains to recolor $z_1z_2$. The vertex $z_2$ has neighborhood $\{x,y,z_1,z_3\}$. Since $z_1,z_2,z_3\in\N(x)\cap\N(y)$ and $z_1z_2,z_2z_3\in\E(G)$, \cref{lem2.5} implies $z_1z_3\notin\E(G)$. Hence
$ \N(z_1)\cap\N(z_2)=\{x,y\}$. 
There are at most $d_G(z_1)+d_G(z_2)-2\le\D+2$
incident blockers. A nonincident blocker is one of the following: the parental edge $xy$; one of the two new spokes $xz_3,yz_3$; or an edge $xq$ or $yq$ where $q$ is adjacent to $x,y,z_1$. By (5.5), every additional neighbor of $z_1$ that is adjacent to one pole is adjacent to both. The triangle $xyz_1$ already has the complete neighbor $z_2$, so there are at most one further such vertex $q$. Thus there are at most five nonincident blockers, and $z_1z_2$ sees at most $\D+7<K$ colored edges. It can be recolored. Finally restore $z_2z_3$ using \cref{lem5.4}.
\end{proof}

\begin{proof}[Proof of the third case of \cref{thm:main}]
Let $G$ be a minimum counterexample. By \cref{lem5.1}, $\delta(G)\ge3$. The structural \cref{lem:structure} gives a configuration (B1) or (B2), but \cref{lemB1} and \cref{lemB2} show that each is reducible. This contradiction proves $\chiD(G)\le2\D-1$.
\end{proof}

\section*{Data Availability Statement}

No datasets were generated or analyzed during the current study.
The exact coefficient-verification code used in the proof is included
in Appendix~\ref{app:code}.

\section*{Conflict of Interest}

The authors declare that they have no conflict of interest.

\newpage
\appendix
\section{Exact verification of the polynomial coefficients}\label{app:code}

For completeness, the following pure-Python code performs exact integer expansion by dynamic programming. For each factor $(X_i-X_j)$, it updates the exponent dictionary and discards exponent vectors exceeding the requested target. It verifies the four coefficients in \cref{tab:coeff} and the octahedral coefficient \eqref{eq:oct-coeff}; no floating-point computation is used.

\begin{lstlisting}[language=Python]
from collections import defaultdict
from itertools import combinations


def sees(e, f, graph_edges):
    if set(e) & set(f):
        return True
    a, b = e
    c, d = f
    cross = [frozenset((a, c)), frozenset((a, d)),
             frozenset((b, c)), frozenset((b, d))]
    return sum(x in graph_edges for x in cross) >= 3


def conflict_graph(edges):
    graph_edges = {frozenset(e) for e in edges}
    adj = [set() for _ in edges]
    for i, j in combinations(range(len(edges)), 2):
        if sees(edges[i], edges[j], graph_edges):
            adj[i].add(j)
            adj[j].add(i)
    return adj


def coefficient(adj, target):
    n = len(target)
    factors = [(i, j) for i in range(n) for j in adj[i] if i < j]
    state = {(0,) * n: 1}
    for i, j in factors:
        new = defaultdict(int)
        for exp, value in state.items():
            if exp[i] < target[i]:
                a = list(exp); a[i] += 1
                new[tuple(a)] += value
            if exp[j] < target[j]:
                b = list(exp); b[j] += 1
                new[tuple(b)] -= value
        state = {e: c for e, c in new.items() if c}
    return state.get(tuple(target), 0)


def fan(d, rim):
    return [('v', i) for i in range(d)] + rim

cases = [
    (fan(4, [(0,1),(1,2),(2,3),(0,3)]),
     [5,4,3,2,3,3,3,3]),
    (fan(5, [(0,1),(1,2),(2,3),(3,4)]),
     [5,5,4,2,3,2,3,2,1]),
    (fan(5, [(0,1),(1,2),(2,3),(3,4)]),
     [5,5,4,2,3,1,3,3,1]),
    (fan(5, [(0,1),(1,2),(2,3),(3,4),(0,4)]),
     [5,5,5,4,5,2,3,2,2,2]),
]

for edges, target in cases:
    print(coefficient(conflict_graph(edges), target))
# Output: -2, 1, -1, 3

octahedron = [
    ('x','y'),('y','z'),('z','x'),
    ('a','x'),('b','x'),('b','y'),
    ('c','y'),('c','z'),('a','z')
]
# Add the three already colored boundary edges only when testing seeing.
full_oct = octahedron + [('a','b'),('b','c'),('a','c')]
full_edges = {frozenset(e) for e in full_oct}
adj = [set() for _ in octahedron]
for i, j in combinations(range(len(octahedron)), 2):
    if sees(octahedron[i], octahedron[j], full_edges):
        adj[i].add(j); adj[j].add(i)
print(coefficient(adj, [3,4,5,3,3,4,3,4,4]))
# Output: 2
\end{lstlisting}

\end{document}